\newtheorem{theorem}{Theorem}[section]
\newtheorem{lemma}[theorem]{Lemma}
\newtheorem{corollary}[theorem]{Corollary}
\newtheorem{fact}[theorem]{Fact}
\newtheorem*{claim*}{Claim}
\newtheorem{problem}[theorem]{Problem}
\newcounter{maintheorem}
\newtheorem{mainth}[maintheorem]{Theorem}
\theoremstyle{remark}
\theoremstyle{definition}
\newtheorem{definition}[theorem]{Definition}
\numberwithin{equation}{section}
\newcommand{\R}{\mathbb{R}}
\newcommand{\N}{\mathbb{N}}
\newcommand{\nn}[1]{{\left\vert\kern-0.25ex\left\vert\kern-0.25ex\left\vert #1 
		\right\vert\kern-0.25ex\right\vert\kern-0.25ex\right\vert}}
\renewcommand{\leq}{\leqslant}
\renewcommand{\geq}{\geqslant}
\renewcommand{\tilde}{\widetilde}
\renewcommand{\epsilon}{\varepsilon}
\DeclareMathOperator{\dist}{dist}
\renewcommand{\H}{\mathcal{H}}
\newcommand{\C}{\mathfrak{c}}
\newcommand{\inte}{\mathrm{int}\,}
\def\IPA#1#2#3#4{\Pi^{#1,#2}_ {#4}\left(#3\right)}
\title[Regularity and $d$-stability]{Characterization of  regularity via variational \smallskip \\
	stability of alternating projection sequences}
\author[F.~Battistoni]{F. Battistoni}
\address[F.~Battistoni]{Dipartimento di Matematica per le Scienze economiche, finanziarie ed attuariali, Universit\`a Cattolica del Sacro Cuore, 20123 Milano, Italy \newline
	\href{https://orcid.org/0000-0003-2119-1881}{\texttt{ORCID:0000-0003-2119-1881}}}
\email{francesco.battistoni@unicatt.it}
\author[A.~Daniilidis]{A. Daniilidis}
\address[A.~Daniilidis]{Institute of Statistics and Mathematical Methods in Economics, E105-04, TU Wien, Wiedner Hauptstra{\ss }e 8, A-1040 Wien, Austria \newline
	\href{https://orcid.org/0000-0003-4837-694X}
	{\texttt{ORCID:0000-0003-4837-694X}}}
\email{aris.daniilidis@tuwien.ac.at}
\author[C.~A.~De~Bernardi]{C. A. De Bernardi}
\address[C.~A.~De~Bernardi]{Dipartimento di Matematica per le Scienze economiche, finanziarie ed attuariali, Universit\`a Cattolica del Sacro Cuore, 20123 Milano, Italy \newline
	\href{https://orcid.org/0000-0002-9654-1324}{\texttt{ORCID:0000-0002-9654-1324}}}
\email{carloalberto.debernardi@unicatt.it, carloalberto.debernardi@gmail.com}
\author[E.~Miglierina]{E. Miglierina}
\address[E.~Miglierina]{Dipartimento di Matematica per le Scienze economiche, finanziarie ed attuariali, Universit\`a Cattolica del Sacro Cuore, 20123 Milano, Italy \newline
	\href{https://orcid.org/0000-0003-3493-8198}{\texttt{ORCID:0000-0003-3493-8198}}}
\email{enrico.miglierina@unicatt.it}
\begin{document}
	
	\begin{abstract} The notion of regular pair $(A,B)$ for two nonempty closed convex subsets $A$ and~$B$ of a Hilbert space $\H$ was introduced by Borwein and Bauschke in 1993 to ensure convergence (in norm) of the alternating projection method to some point of the best approximation set. In 2022, De Bernardi and Miglierina showed that regularity of the pair $(A,B)$ guarantees, additionally, the convergence for any variational perturbation of the alternating projection method, provided the corresponding best approximation sets are bounded. In this work, we show that the converse assertion is also true. Moreover, this converse assertion holds without requiring the best approximation sets to be bounded.
	\end{abstract}
	\maketitle
	
	\noindent\textbf{Keywords}: Alternating projection method, Convex feasibility problem, $d$-stability, \\
	regularity, Attouch-Wets convergence.
	
	\vspace{0.5cm}
	
	\noindent\textbf{AMS Classification}: \textit{Primary}: 47J25 \textit{Secondary}: 90C25, 90C48
	\medskip
	
	\tableofcontents

	%-------------------------------------------------------%
	%                                                       %
	% 						INTRODUCTION 					%
	%                                                       %
	%-------------------------------------------------------%
	\section{Introduction}
	
	\noindent Let $A$ and $B$ be two nonempty closed convex sets in a Hilbert space $\H$. The 2-set convex feasibility problem consists of finding a pair of points $\bar{a}\in A$ and $\bar{b}\in B$ realizing the distance between $A$ and $B$, that is, 
    $$\|\bar{a}-\bar{b}\|=\dist(A,B):=\inf_{a\in A}\inf_{b\in B}\|a-b\|.$$
    Denoting by $\dist(a,B):=\inf_{b\in B}\|a-b\|$, a necessary and sufficient condition for the existence of such a pair is that the {\em best approximation sets} 
	\begin{equation}\label{eq:E,F}
		E:=\{a\in A:\, \dist(a,B)=\dist(A,B)\},
		\qquad F:=\{b\in B:\, \dist(b,A)=\dist(A,B)\}
	\end{equation}
	are nonempty. In this work, we shall assume that this condition is always satisfied and consequently the problem is well-posed. Notice that this happens whenever the intersection of the sets $A$ and $B$ is nonempty (in this case, $E=F=A\cap B$).
	\smallskip\newline
	The method of alternating projections is the simplest iterative procedure for finding a solution of the convex feasibility problem and it goes back to von Neumann \cite{vonNeumann50}: let us denote by $P_A$ and $P_B$ the projections onto the sets $A$ and $B$, respectively. Then for any starting point $a_0\in \H$, consider the {\em alternating projection sequences} $\{a_n\}_{n\ge 1} \subset A$ and $\{b_n\}_{n\geq 1} \subset B$ defined inductively by 
	\begin{equation}\label{eq:alternProjDef}
		b_n=P_{B}(a_{n-1})\ \ \text{and}\ \ a_n=P_{A}(b_n)\ \ \ \ \ (n\in\N).    
	\end{equation}
	If the sequences $\{a_n\}$ and $\{b_n\}$ converge in norm, then we say that the method of alterna\-ting projections (also known as the von Neumann method) converges, that is, there exist $\bar a \in E$, $\bar b \in F$ such that $\lim_{n\to \infty} a_n = \bar a$ and $\lim_{n\to\infty} b_n = \bar b$. This is always the case in finite dimensions provided the sets $E$, $F$ in~\eqref{eq:E,F} are nonempty, while in infinite dimensions, the sequences $\{a_n\}$ and $\{b_n\}$ are only weakly converging, even if $A\cap B \neq \emptyset$, see ~\cite{BauschkeBorwein94}*{Theorem~4.8}. Indeed, in the separable Hilbert space $\ell_2$, a celebrated example of Hundal in 2004 (\cite{Hundal}) shows that in general the alternating projections method may fail to converge in norm. This example was later simplified by Kopeck\'a \cite{Kop08} and motivated further investigations on conditions that ensure the convergence of the von Neumann method. \smallskip\newline
	Besides the case in which $A$ and $B$ are linear subspaces (this was the original setting studied by von Neumann himself, where obviously one has $0\in A\cap B =E=F$), one of the 
	best known and most important sufficient conditions for the convergence of such a method is the notion of {\em regularity} for the pair $(A,B)$ (see forthcoming Definition~\ref{def: regularity}), introduced by Bauschke and Borwein in \cite{BauschkeBorwein93}. This notion (which already requires the best approximation sets $E$ and $F$ to be nonempty) guarantees the norm convergence of the sequences defined in~\eqref{eq:alternProjDef}.
	\smallskip\newline
	In the more recent papers \cites{DebeMiglStab,DebeMiglMol,DebeMigl}, the authors studied stability properties of the alternating projection method. More precisely, for any two sequences of nonempty closed convex sets
	$\{A_n\}_{n\geq 1}$ and $\{B_n\}_{n\geq 1}$ such that $\lim_{n\to \infty} A_n = A$ and $\lim_{n\to\infty}B_n= B$ for the Attouch-Wets {variational} convergence (see forthcoming Definition~\ref{def:AW}) and any initial point $a_0\in \H$, they considered the {\em perturbed alternating projection sequences}  $\{a_n\}_{n\ge 1}$ and~$\{b_n\}_{n \ge 1}$, defined as follows:
	\begin{equation} \label{eq:var-alt-proj}
		b_n=P_{B_n}(a_{n-1})\ \ \ \text{and}\ \ \  a_n=P_{A_n}(b_n) \ \ \ \ \ \ \ \ \ (n\in\N).
	\end{equation}
	In~\cite{DebeMiglStab} it was shown that if $E,F$ are nonempty and bounded, then the aforementioned regularity condition on the pair $(A,B)$ not only guarantees the norm convergence of the alternating projection sequence in~\eqref{eq:alternProjDef}, but also the norm convergence of its {\em variational} version given by~\eqref{eq:var-alt-proj}. To be more precise, using the notation of~\eqref{eq:E,F} let us recall the notion of $d$-stability introduced in \cite{DebeMiglStab}. 
	\begin{definition}[$d$-stability]\label{def:perturbedseq} We say that the pair $(A,B)$ of two nonempty closed convex subsets of a Hilbert space $\H$ is {\em $d$-stable}, whenever all perturbed alternating projection sequences $\{a_n\}_{n\geq 1}$ and~$\{b_n\}_{n\geq 1}$ given in~\eqref{eq:var-alt-proj} satisfy $$\lim_{n\to\infty}\mathrm{dist}(a_n,E) = 0 \qquad \text{and} \qquad \lim_{n\to\infty}\mathrm{dist}(b_n,F)= 0.$$ 
	\end{definition}
	\noindent The main result of \cite{DebeMiglStab} asserts that if the pair $(A,B)$ is regular and the sets $E$ and $F$ are (nonempty and) bounded, then the pair $(A,B)$ is $d$-stable. The assumption on the boundedness of the best approximation sets $E$ and $F$ is necessary for this statement, since Example~5.2 in \cite{DebeMiglStab} provides a  regular pair $(A,B)$ with $E=F=A\cap B\neq \emptyset$ being unbounded, which is not $d$-stable. 	
	 In the same paper, the authors asked whether the inverse implication holds, stating the following open problem:
	\begin{problem}\label{open problem} 
		Is every $d$-stable pair $(A,B)$ necessarily regular?	
	\end{problem}
	
	\noindent The main aim of the present paper is to show that Problem~\ref{open problem} has a positive answer, and this answer does not require boundedness of the best approximation sets. % under the condition that the best approximation set $E$ is bounded (equivalently, $F$ is bounded). 
	In particular, we establish the following result.
	\begin{mainth}\label{main theorem} Let $A,B$ be nonempty closed convex subsets of $\H$. If  the pair $(A,B)$  is $d$-stable, then the pair $(A,B)$ is regular. %and bounded. Then the following are equivalent:\smallskip\newline
		%$\mathrm{(i)}$  the pair $(A,B)$ is regular; \smallskip\newline
		%$\mathrm{(ii)}$ every variational alternating projection algorithm converges \newline \phantom{aris}(that is, the pair $(A,B)$ is $d$-stable).	
	\end{mainth}
	
	\begin{corollary}\label{main corollary} Let $A,B$ be nonempty closed convex subsets of $\H$ and assume that the best approximation sets $E$ and $F$ are  bounded. 
    Then the following are equivalent:
\begin{enumerate}
    \item the pair $(A,B)$ is regular;
    \item the pair $(A,B)$ is $d$-stable. 
\end{enumerate}
		\end{corollary}
	\noindent Theorem \ref{main theorem} establishes implication (ii)$\Longrightarrow$(i) (see forthcoming Theorems~\ref{th: separated case} and~\ref{th: general case}). The other implication was previously established in~\cite{DebeMiglStab}*{Theorem~4.9}. Altogether, Theorem~A, Corollary~\ref{main corollary} and Example~5.2 in  \cite{DebeMiglStab}  provide a complete characterization of regularity by means of variational stability of alternating projection sequences. \smallskip\newline
	Regularity of the pair $(A,B)$ is in fact stronger than mere convergence of alternating projection sequences, see \cite[Example~5.5]{BauschkeBorwein93}. Indeed, assuming that the best approximation sets are bounded, regularity turns out to be equivalent to a good behaviour of the alternating projections with respect to perturbations of the two sets $A$ and $B$, suitable for a study involving perturbations of the original sets, for which it could be easier to compute the projections. In specific cases, regular convex feasibility problems could also lead to the development of algorithmic procedures, designed for the study of convex sets $A$ and $B$ whose description is affected by some errors due to uncertainty in the data. On the other hand, our result suggests avoiding this variational approach in the case of absence of regularity (see Section~\ref{sec: problems} for more details).	\smallskip\newline
	We resume this introduction with a brief description of the structure of the paper. In Section~\ref{SEction notations} we fix our notation and review preliminary notions. Definitions and properties of the Hausdorff and, respectively, the Attouch-Wets set convergence are therein recalled, together with relevant properties of metric projections and of alternating projections method in a Hilbert space. In particular, the relationship between regularity and $d$-stability obtained in \cite{DebeMiglStab} is recalled. In Section~\ref{sec: main}, we prove the main result of the paper, namely, the fact that $d$-stability implies regularity for a given convex feasibility problem. After two key technical lemmas (namely, Lemma~\ref{lemma: geometrical fact} and Lemma~\ref{lemma:  black box}), the proof of our main result will be divided into two cases: first, we consider the case where $A\cap B$ is the empty set, then we move to the case where $A\cap B$ is nonempty, where the proof becomes more involved. Finally, in Section~\ref{sec: problems} we present some additional remarks, conclusions and open questions.

	%-------------------------------------------------------%
	%                                                       %
	% 						PRELIMINARIES 					%
	%                                                       %
	%-------------------------------------------------------%
	
	\section{Notation and preliminaries }\label{SEction notations}
	Throughout this paper, we will always work with a Hilbert space~$\H$ equipped with an inner product $\langle\cdot,\cdot\rangle$ and its induced norm $\|\cdot\|$. We denote by $\mathbb{B}_{\H}$ and $\mathbb{S}_{\H}$ the closed unit ball and the unit sphere of $\H$, respectively. We denote by $\H^*$ the dual space of $\H$, even if the space $\H^*$ can be represented by $\H$ itself. This notation allows us to emphasize the role of linear functionals in several arguments, making them clearer. Further, we denote by $\mathrm{Fix}(T)$ the set of fixed points of an operator $T:\H\rightarrow \H$. \smallskip\newline
    If $\alpha>0$, $x\in X$, and $A,B\subset \H$, we set as usual
	$$x+A:=\{x+a;\, a\in A\},\quad \alpha A:=\{\alpha a;\, a\in A\},\quad A+B:=\{a+b;\, a\in A,\, b\in B\}.$$ 
	For any two points $x,y\in \H$, we denote by $[x,y]$ the closed segment in $\H$ with
	endpoints $x$ and $y$ and we set $(x,y)=[x,y]\setminus\{x,y\}$ for the
	corresponding ``open'' segment. The segment $(x,y]$ is defined similarly. For a subset $A$ of $\H$, we denote by $\inte(A)$ the interior of $A$. 
	We further define the distance of a point $x\in X$ to a set $A$ as follows:
	$$\dist(x,A) :=\inf_{a\in A} \|a-x\|.$$ 
	We are going to use the following  simplified notation. If $f$ is a real-valued function defined on $\H$, we denote
	$$
	[f\leq\alpha]:=\{x\in \H:\,  f(x)\leq\alpha\}\quad\text{for $\alpha\in\R$.}
	$$
	The sets $[f\geq\alpha]$ and $[f=\alpha]$ are defined in a similar way. Finally, we recall that $$\ker f:=\{x\in \H:f(x)=0\},$$ whenever $f$ is a linear functional defined on $\H$.

	\subsection{Hausdorff and Attouch-Wets convergence of sequences of sets} Let us now review two notions of convergence for a sequence of sets (for a more detailed overview of this topic, see, e.g.,~\cite{Beer}). By 
    $\C(\H)$ we denote the family of all nonempty closed subsets of~$\H$.
	Let us introduce the (extended) Hausdorff metric $\mathrm{D}_{\mathrm{H}}$ in
	$\C(\H)$. For $A,B\in\C(\H)$, we define the excess of $A$ over $B$
	as follows:
	$$e(A,B): = \sup_{a\in A} \mathrm{dist}(a,B).$$
	\noindent Notice that the above definition still makes sense if one of the two sets is empty: indeed, if $A\neq\emptyset$ and $B=\emptyset$ we set $e(A,B)=\infty$, while if $A=\emptyset$ we set $e(A,B)=0$. Furthermore, we define the Hausdorff distance 	between sets $A$ and $B$ as follows:
	$$\mathrm{D}_{\mathrm{H}}(A,B):=\max \bigl\{ \, e(A,B),\, e(B,A) \,\bigr\}.$$
	The above metric gives rise to our first notion of set convergence. 
	\begin{definition}[Hausdorff convergence] We say that a sequence $\{A_n\}$ in $\C(\H)$ 
		converges \textit{in the Hausdorff sense} to a set $A\in\C(\H)$ if $$\textstyle \lim_{n\to\infty} \mathrm{D}_{\mathrm{H}}(A_n,A) = 0\,.$$
	\end{definition}
	\noindent According to \cite{LUCC}*{Theorem~8.2.12} the sequence $\{A_n\}_{n \geq 1}$ in $\C(\H)$ is Hausdorff converging to a set $A \in \C(\H)$ if and only if 
	$$
	\sup_{x\in X}\,|\dist(x,A_n)-\dist(x,A)| \,\underset{n\to\infty}{\longrightarrow} 0\,. 
	$$
	We now consider a weaker notion of convergence, the so-called Attouch-Wets convergence (see,
	e.g., \cite{LUCC}*{Definition~8.2.13}). To this end, fix $r>0$, 
	$A,B\in\C(\H)$ and set:
	\begin{eqnarray*}
		e_r(A,B) &:=& e(A\cap r\,\mathbb{B}_{\H}, \,B) \,\,\in[0,\infty),\\
		\mathrm{D}_{\mathrm{H},r}(A,B) &:=& \max\,\{e_r(A,B),\, e_r(B,A)\}.
	\end{eqnarray*}
	The above family of pseudo-distances gives rise to the following notion of convergence:
	\begin{definition}[Attouch-Wets convergence]\label{def:AW} We say that a sequence $\{A_n\}_{n\geq 1}$ in $\C(\H)$ converges in the \textit{Attouch-Wets sense} to a set $A\in\C(\H)$ if for every $r>0$ we have:
		$$\textstyle \lim_{n\to \infty} \mathrm{D}_{\mathrm{H},\,r}(A_n,A)= 0\,.$$
	\end{definition}
	\noindent The Attouch-Wets convergence can be seen as a "localization" of the Hausdorff convergence. Indeed, according to \cite{LUCC}*{Theorem~8.2.14}, a sequence $\{A_n\}_{n\geq 1}$ in $\C(\H)$ is Attouch-Wets converging to a set $A\in \C(\H)$ if and only if for every $r>0$
	$$
	\sup_{x\in r\mathbb{B}_{\H}}\,|\dist(x,A_n)-\dist(x,A)|\,\underset{n\to\infty}{\longrightarrow} 0\,.
	$$
	%These characterizations of the two notions of set convergence show that if the sequence $\{A_n\}$ %Hausdorff converges, then $\{A_n\}$ is also Attouch-Wets converging.
	The following property relates Hausdorff and Attouch-Wets convergences.
	\begin{fact}\label{fact: AW-intersection balls}
		Let $A\in\C(\H)$, $\{A_n\}_{n\geq 1}\subset \C(\H)$, $\{r_n\}_n \subset (0,+\infty)$ increasingly converging to~$+\infty$, and $\{\delta_n\}_n \subset (0,+\infty)$ converging to $0$. Assume that $\mathrm{D}_{\mathrm{H}}(A\cap r_n\mathbb{B}_{\H}, A_n)\leq \delta_n$, whenever $n\geq 1$. Then, for every $r>0$, the inequality $\mathrm{D}_{\mathrm{H},r}(A,A_n)\leq \delta_n$ eventually holds as $n\to\infty$. In particular,    the sequence $\{A_n\}_{n\geq 1}$ is Attouch-Wets converging to $A$.  
	\end{fact}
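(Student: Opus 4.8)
The plan is to fix $r>0$ and to verify directly the two excess inequalities $e_r(A,A_n)\leq\delta_n$ and $e_r(A_n,A)\leq\delta_n$ for all sufficiently large $n$, since $\mathrm{D}_{\mathrm{H},r}(A,A_n)$ is by definition the maximum of these two quantities. The key elementary observation is that, because $\{r_n\}$ increases to $+\infty$, for every fixed $r$ there is an index $N=N(r)$ such that $r\leq r_n$ whenever $n\geq N$; from that index on one has the inclusions $A\cap r\mathbb{B}_{\H}\subseteq A\cap r_n\mathbb{B}_{\H}\subseteq A$, which let us compare distances to the truncated sets with distances to $A$ itself.

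For the first excess, fix $n\geq N$. Using $A\cap r\mathbb{B}_{\H}\subseteq A\cap r_n\mathbb{B}_{\H}$ together with the monotonicity of the excess in its first argument, I would write
\[ e_r(A,A_n)=e(A\cap r\mathbb{B}_{\H},A_n)\leq e(A\cap r_n\mathbb{B}_{\H},A_n)\leq\mathrm{D}_{\mathrm{H}}(A\cap r_n\mathbb{B}_{\H},A_n)\leq\delta_n. \]
For the second excess, I would exploit that $A\cap r_n\mathbb{B}_{\H}\subseteq A$, so that for every $y\in A_n$,
\[ \dist(y,A)\leq\dist(y,A\cap r_n\mathbb{B}_{\H})\leq e(A_n,A\cap r_n\mathbb{B}_{\H})\leq\mathrm{D}_{\mathrm{H}}(A\cap r_n\mathbb{B}_{\H},A_n)\leq\delta_n; \]
passing to the supremum over $y\in A_n\cap r\mathbb{B}_{\H}$ gives $e_r(A_n,A)\leq\delta_n$. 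Combining the two bounds yields $\mathrm{D}_{\mathrm{H},r}(A,A_n)\leq\delta_n$ for every $n\geq N$, which is precisely the claimed eventual inequality. The last assertion is then immediate: since $\delta_n\to 0$, for each $r>0$ we get $\mathrm{D}_{\mathrm{H},r}(A,A_n)\to 0$ as $n\to\infty$, and the Attouch-Wets convergence of $\{A_n\}$ to $A$ follows from Definition~\ref{def:AW}.

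The only point that requires a little care is the possible emptiness of the truncations: a priori $A\cap r_n\mathbb{B}_{\H}$ could be empty. However, since every $A_n\in\C(\H)$ is nonempty and $e(A_n,\emptyset)=+\infty$, the hypothesis $\mathrm{D}_{\mathrm{H}}(A\cap r_n\mathbb{B}_{\H},A_n)\leq\delta_n<+\infty$ forces $A\cap r_n\mathbb{B}_{\H}\neq\emptyset$; likewise, if some $A_n\cap r\mathbb{B}_{\H}$ is empty then $e_r(A_n,A)=0$ by convention and the corresponding bound is trivial. Beyond this bookkeeping I do not expect any genuine obstacle, the statement being essentially a localization of the Hausdorff estimate.
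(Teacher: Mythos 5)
Your proof is correct and follows essentially the same route as the paper's: both excesses $e_r(A,A_n)$ and $e_r(A_n,A)$ are bounded by $\mathrm{D}_{\mathrm{H}}(A\cap r_n\mathbb{B}_{\H},A_n)\leq\delta_n$ using monotonicity of the excess under set inclusion, with the index threshold needed only for the first one. Your extra remark on the nonemptiness of the truncations $A\cap r_n\mathbb{B}_{\H}$ is a harmless piece of bookkeeping that the paper leaves implicit.
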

	\begin{proof}
		Let $r\in\N$. For every $n\in \N$, we clearly have
		$$
		e_r(A_n,A)=e(A_n\cap r\mathbb{B}_{\H},A)\leq e(A_n, A) \leq e(A_n,A\cap r_n\mathbb{B}_{\H})\leq \delta_n.
		$$
		Moreover, if $n_0\in \N$ is such that $r_{n_{0}}>r$, then for all $n\geq n_0$ we have 
		$$ 
		e_r(A,A_n)=e(A\cap r\mathbb{B}_{\H},A_n)\leq e(A\cap r_n\mathbb{B}_{\H},A_n)\leq \delta_n\,.
		$$
		We have shown that $\mathrm{D}_{\mathrm{H},r}(A,A_n)\leq \delta_n$ eventually holds as $n\to\infty$. The final conclusion, about the Attouch-Wets convergence of $\{A_n\}_{n\geq 1}$, immediately follows.
	\end{proof}
	
	\subsection{Projections in Hilbert spaces and the alternating projections method}
	The notions of distance between two convex sets and of projection of a point onto a convex set of a Hilbert space play a fundamental role in our paper. The projection onto a nonempty closed convex subset~$C$ maps any point $x_0\in \H$ to its nearest point in $C$, denoted by $P_C (x_0)$. 
	We recall the following result, usually named {\em variational characterization of the projection onto}  $C$. Let $c_0\in C$ and $x_0\in \H$. Then $c_0=P_C(x_0)$ if and only if 
	\begin{equation*}\label{eq:variat_charact}
		\langle x_0-c_0,c-c_0\rangle\leq 0,\quad \text{for all }\  c\in C.
	\end{equation*}
	We recall that the projection $P_C$ is a nonexpansive map from $\H$ to $C$, i.e., it holds \\$\|P_C(x)-P_C(y)\|\leq \|x-y\|$ (see, e.g., \cite{LUCC}*{Proposition~10.4.8}). 
	
	In the sequel, we shall need the following elementary fact, the proof of which is left to the reader.
	\begin{fact}\label{fact: carabinieri per proiezioni} Let $C, D\subset\H$ be  nonempty closed convex sets such that $C\subset D$. Then:
		\begin{enumerate}
			\item[$\mathrm{(a)}$] If $C$ is an affine set then $P_C$ is an affine function.
			\item[$\mathrm{(b)}$] If  $b,p\in\H$ are such that   $p=P_{D}(b)$ and $ p\in C$, then  $p=P_{C}(b)$.
		\end{enumerate}
		
	\end{fact}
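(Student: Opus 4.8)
The plan is to obtain both assertions directly from the variational characterization of the metric projection recorded in~\eqref{eq:variat_charact}, which is really the only tool needed. For part~(b) the argument is immediate: assume $p=P_D(b)$ and $p\in C$. By~\eqref{eq:variat_charact} applied to $D$ we have $\langle b-p,\,d-p\rangle\le 0$ for every $d\in D$; since $C\subset D$, this inequality holds in particular for every $c\in C$. As $p\in C$, the characterization~\eqref{eq:variat_charact}, now applied to $C$, yields $p=P_C(b)$.

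For part~(a) I would first reduce to the case of a closed linear subspace. Since $C$ is closed, convex and affine, we may write $C=x_0+V$ with $x_0\in C$ and $V:=C-x_0$ a closed linear subspace of $\H$. Using~\eqref{eq:variat_charact} one checks that $P_C(x)=x_0+P_V(x-x_0)$ for every $x\in\H$: indeed $c_0=P_C(x)$ iff $\langle (x-x_0)-(c_0-x_0),\,v\rangle\le 0$ for all $v\in V$, and since $V$ is a subspace (so $\pm v\in V$) this is equivalent to $(x-x_0)-(c_0-x_0)\in V^{\perp}$, i.e.\ to $c_0-x_0=P_V(x-x_0)$. It then remains to recall that $P_V$ is linear, which again follows from~\eqref{eq:variat_charact} (or from the orthogonal decomposition $\H=V\oplus V^{\perp}$): for $x,y\in\H$ and scalars $\alpha,\beta$, the vector $\alpha P_V(x)+\beta P_V(y)$ lies in $V$ while $(\alpha x+\beta y)-(\alpha P_V(x)+\beta P_V(y))=\alpha(x-P_V(x))+\beta(y-P_V(y))\in V^{\perp}$, so by uniqueness of the projection it equals $P_V(\alpha x+\beta y)$. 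Consequently $P_C$ is the composition of the translation $x\mapsto x-x_0$, the linear map $P_V$, and the translation by $x_0$, hence affine.

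There is no genuine obstacle here: both statements are elementary consequences of~\eqref{eq:variat_charact}. The only points requiring a line of care are, in~(a), the passage from a general affine set to a linear subspace — equivalently, verifying the identity $P_{x_0+V}=x_0+P_V(\,\cdot\,-x_0)$ — and invoking the standard linearity of the orthogonal projection onto a closed subspace; in~(b), nothing beyond the inclusion $C\subset D$ is used.
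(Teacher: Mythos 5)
Your proof is correct: the paper actually leaves this Fact unproved (``the proof of which is left to the reader''), and your argument is precisely the standard one the authors intend — part (b) follows by restricting the variational inequality \eqref{eq:variat_charact} for $D$ to the subset $C\subset D$ and invoking $p\in C$, and part (a) follows by writing $C=x_0+V$ with $V$ a closed subspace, checking $P_C(x)=x_0+P_V(x-x_0)$, and using the linearity of the orthogonal projection $P_V$. I see no gaps; both the reduction to a linear subspace and the uniqueness argument for $P_V(\alpha x+\beta y)$ are carried out correctly.
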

	\smallskip

	Given $n\in\N$ and two convex sets $A,B\in \C(\H)$, let us consider the nonexpansive map  $\Pi^{{A},{B}}_n:\H\to\H$, defined by
	$$\IPA{{A}}{{B}}{x}{0}=x,\quad\IPA{{A}}{{B}}{x}{n}=\underbrace{P_{{A}} P_{B}\ldots P_{{A}} P_{B}}_{n \text{ times}}(x),\qquad x\in\H.$$
	Given $A$ and $B$ in $\C(\H)$, let $\{a_n\}_n$, $\{b_n\}_n$ be the corresponding alternating projection sequences as in~\eqref{eq:alternProjDef}, starting from the initial point $x\in\mathcal{H}$, then we can rewrite them in terms of the above operator $\IPA{{A}}{{B}}{\,\cdot\,}{n}$ as follows:
	$$
	a_n = \IPA{{A}}{{B}}{x}{n}, \qquad b_{n+1}= P_B(\IPA{{A}}{{B}}{x}{n}).
	$$
	
	Let us now consider two nonempty closed convex subsets $A$ and $B$ and define the {\em displacement vector}
	\begin{equation} \label{eq:displ}
		v:=P_{\overline{B-A}}(0) \qquad\text{for the pair } \, (A,B)\,.
	\end{equation}
Recalling~\eqref{eq:E,F} we have that the (possibly empty) best approximation sets $E,F$ are disjoint if and only if $A\cap B=\emptyset$. Notice that if $E$ and $F$ are nonempty, then there exist $\bar a \in E$, $\bar b \in F$ such that $v=\bar b - \bar a$. In case $A\cap B\neq\emptyset$, then $E=F=A\cap B$ and the displacement vector for the pair $(A,B)$ is null. Furthermore:
	\begin{fact}[{\cite{BauschkeBorwein93}*{Fact~1.1}}]\label{fact: BB93} Suppose that $\H$ is a Hilbert space and that $A,B$ are nonempty closed convex subsets of $\H$, such that the corresponding best approximation sets $E$ and $F$ are nonempty. Then:
		\begin{enumerate}
			\item $\|v\|=\mathrm{dist}(A,B)$ and $E+v=F$; 
			\item $E=\mathrm{Fix}(P_A P_B)=A\cap(B-v)$ and $F=\mathrm{Fix}(P_B P_A)=B\cap(A+v)$;
			\item For every $\bar a \in E$ and $\bar b \in F$ we have: 
			$$P_B \bar a=P_F \bar a =\bar a +v \qquad \text{ and } \qquad P_A \bar b =P_E \bar b =\bar b-v.$$
		\end{enumerate}
	\end{fact}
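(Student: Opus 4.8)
The plan is to base everything on two tools from the preliminaries: the variational characterization of the projection in \eqref{eq:variat_charact} and the elementary Fact~\ref{fact: carabinieri per proiezioni}(b). The single computation that drives the whole proof is the variational inequality for the displacement vector. Applying \eqref{eq:variat_charact} to $v=P_{\overline{B-A}}(0)$ with base point $0$ yields $\langle v,w\rangle\geq\|v\|^2$ for every $w\in\overline{B-A}$; specializing to $w=b-a$ gives the key estimate $\langle v,b-a\rangle\geq\|v\|^2$ for all $a\in A$ and $b\in B$. For part~(1), I would first observe that $\dist(A,B)=\inf\{\|b-a\|:a\in A,\,b\in B\}=\dist(0,\overline{B-A})$, so that $\|v\|=\|v-0\|=\dist(0,\overline{B-A})=\dist(A,B)$ directly from the definition of $v$ as the nearest point of $\overline{B-A}$ to the origin.

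The core of the argument is the chain of equalities in part~(2); I would establish $\mathrm{Fix}(P_AP_B)=A\cap(B-v)$ first and then deduce $E=\mathrm{Fix}(P_AP_B)$. For the inclusion $\mathrm{Fix}(P_AP_B)\subseteq A\cap(B-v)$, take $a$ with $P_AP_Ba=a$, set $b=P_Ba$, and combine the two variational characterizations (for $b=P_Ba$ and for $a=P_Ab$) to obtain $\langle b-a,w\rangle\geq\|b-a\|^2$ for every $w\in B-A$, hence for every $w\in\overline{B-A}$ by continuity. Since $b-a\in\overline{B-A}$ and $v$ satisfies the analogous inequality, a short Cauchy--Schwarz comparison (test each inequality against the other vector) forces $b-a=v$, so $a+v=b\in B$. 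For the reverse inclusion, given $a\in A$ with $a+v\in B$, the key estimate $\langle v,b'-a\rangle\geq\|v\|^2$ ($b'\in B$) yields $P_Ba=a+v$ through \eqref{eq:variat_charact}, while the same estimate with $b'=a+v$ gives $\langle v,(a+v)-a'\rangle\geq\|v\|^2$, i.e. $\langle v,a'-a\rangle\leq0$ for all $a'\in A$, hence $P_A(a+v)=a$; together these say $a\in\mathrm{Fix}(P_AP_B)$. To identify this fixed-point set with $E$, I would invoke uniqueness of the projection: if $a\in A\cap(B-v)$ then $\dist(a,B)\leq\|v\|=\dist(A,B)$, so $a\in E$; conversely, if $a\in E$ and $b=P_Ba$, then $\dist(b,A)\leq\|a-b\|=\|v\|\leq\dist(b,A)$ forces $P_Ab=a$ by uniqueness. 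The statement for $F$ follows by symmetry, noting that the displacement vector of the pair $(B,A)$ is $-v$.

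Finally, part~(1) (the identity $E+v=F$) and part~(3) become bookkeeping once the set descriptions $E=A\cap(B-v)$ and $F=B\cap(A+v)$ are in hand: $a\in E$ means $a\in A$ and $a+v\in B$, which is exactly $a+v\in F$, and symmetrically, giving $E+v=F$. For~(3), the identity $P_Be=e+v$ for $e\in E$ was already produced above; since $e+v\in F\subseteq B$, Fact~\ref{fact: carabinieri per proiezioni}(b) (with $C=F$, $D=B$) upgrades it to $P_Fe=e+v$, and the symmetric identities $P_Af=P_Ef=f-v$ for $f\in F$ are obtained the same way. I expect the main obstacle to be the equality $b-a=v$ in the first inclusion of part~(2): one must resist concluding merely that $b-a$ is \emph{a} minimizer and instead pin it down as the unique minimum-norm element of $\overline{B-A}$, which is precisely where passing to the closure (rather than to $B-A$ itself, whose infimum need not be attained) and the equality case of Cauchy--Schwarz are essential.
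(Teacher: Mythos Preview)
The paper does not prove this statement: Fact~\ref{fact: BB93} is quoted verbatim from \cite[Fact~1.1]{BauschkeBorwein93} and stated without proof, as is customary for background material. Your argument is therefore not competing with any proof in the paper, but it is correct and self-contained. The key step---showing that a fixed point $a$ of $P_AP_B$ satisfies $P_Ba-a=v$ by combining the two variational inequalities into $\langle b-a,w\rangle\geq\|b-a\|^2$ for all $w\in\overline{B-A}$ and invoking uniqueness of the projection of $0$ onto $\overline{B-A}$---is exactly the standard one, and your use of Fact~\ref{fact: carabinieri per proiezioni}(b) to pass from $P_Be=e+v$ to $P_Fe=e+v$ is clean. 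One cosmetic remark: in the reverse inclusion of part~(2) you could simply observe that the inequality $\langle b-a,w\rangle\geq\|b-a\|^2$ for all $w\in\overline{B-A}$ \emph{is} the variational characterization \eqref{eq:variat_charact} of $P_{\overline{B-A}}(0)$, so $b-a=v$ by uniqueness of the metric projection, without detouring through Cauchy--Schwarz; but your Cauchy--Schwarz argument is also fine.
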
 
   %If $E,F$ are nonempty and disjoint, denoting by $f\in \H^*$ the linear functional $f(\cdot)\coloneqq \langle \cdot, v\rangle$, there exists $\eta\in\R$ such that $f(\bar a)=\eta$ for every $\bar a \in E$.
	
	\subsection{Regularity and $d$-stability for a pair of convex sets} \label{Section regularity}
	We shall now state the notion of regularity for a pair of nonempty closed convex sets $A$ and $B$, which has been introduced in~\cite{BauschkeBorwein93} to ensure convergence in norm for the alternating projection algorithm (see also~\cite{BorweinZhu}). 
    
	\begin{definition}[regular pair]\label{def: regularity} Let $A$ and $B$ be two nonempty closed convex subsets of a Hilbert space $\H$, such that the corresponding best approximation sets $E$ and $F$ are nonempty.
		The pair $(A,B)$ is called {\em regular} if for every $\epsilon>0$ there exists $\delta>0$ such that 
			for every $x\in \H$ we have:
			\begin{equation}\label{eq:BB}
				\max\left\{\,\mathrm{dist}(x,A),\mathrm{dist}(x,B-v)\,\right\}\,\leq\,\delta \quad\implies\quad \mathrm{dist}(x,E)\leq \epsilon\,.
			\end{equation}			
	\end{definition}
	We refer the reader to~\cite{BauschkeBorwein93} for concrete examples of pair of sets satisfying (or failing to satisfy) regularity property. \smallskip\newline

	The next result follows from \cite{BauschkeBorwein93}*{Theorem~3.7} and \cite{DebeMiglStab}*{Theorem~4.9}. Indeed, recalling Definition~\ref{def:perturbedseq} we have: 
	\begin{theorem}
		Let $A,B$ be nonempty closed convex subsets of a Hilbert space $\H$ so that the pair $(A,B)$ is regular. The following assertions hold:
		\begin{enumerate}
			\item  the alternating projection method converges in norm;
			\item if $E$, $F$ are bounded, the pair $(A,B)$ is $d$-stable.
		\end{enumerate}
	\end{theorem}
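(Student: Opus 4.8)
The plan is to reduce both statements to results already available in the literature, reading them through Proposition~\ref{prop: regular} and Definition~\ref{def:perturbedseq}; no new argument is required here, since the novelty of the present paper lies in the converse implication.

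For (i) I would reproduce the classical Bauschke--Borwein argument \cite[Theorem~3.7]{BauschkeBorwein93}. Fix $e\in E$ and put $f=e+v\in F$; by Fact~\ref{fact: BB93}(3) one has $P_Be=f$ and $P_Af=e$, so nonexpansivity of $P_A$ and $P_B$ gives $\|a_{n+1}-e\|\le\|b_{n+1}-f\|\le\|a_n-e\|$ for every $n$, whence $\|a_n-e\|$ and $\|b_n-f\|$ are nonincreasing and share a common limit $\ell_e$. Applying next the variational characterization~\eqref{eq:variat_charact} of $P_A$ (which yields its firm nonexpansivity) to the pair $(b_{n+1},f)$ and using $f-e=v$ gives
$$\|(b_{n+1}-a_{n+1})-v\|^2\ \le\ \|b_{n+1}-f\|^2-\|a_{n+1}-e\|^2\ \underset{n\to\infty}{\longrightarrow}\ 0,$$
so that $\dist(a_n,B-v)\le\|a_n-b_n+v\|\to 0$, while $\dist(a_n,A)=0$ trivially. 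Regularity then forces $\dist(a_n,E)\to 0$, and since $\{a_n\}$ is Fej\'er-monotone with respect to the closed convex set $E$, taking $e_n:=P_E(a_n)$ we obtain $\|a_m-e_n\|\le\|a_n-e_n\|=\dist(a_n,E)$ for all $m\ge n$, so $\{a_n\}$ is norm-Cauchy; nonexpansivity of $P_B$ then transfers convergence to $\{b_n\}$, and the limits lie in $E$ and $F$.

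For (ii), since $E$ (equivalently $F$, by Fact~\ref{fact: BB93}(1)) is bounded, the pair $(A,B)$ satisfies exactly the hypotheses of \cite[Theorem~4.9]{DebeMiglStab}: for any $\{A_n\},\{B_n\}$ in $\C(\H)$ converging Attouch--Wets to $A$ and $B$, all perturbed sequences produced by~\eqref{eq:var-alt-proj} satisfy $\dist(a_n,E)\to 0$ and $\dist(b_n,F)\to 0$, which by Definition~\ref{def:perturbedseq} is precisely $d$-stability of $(A,B)$. Thus the proof of (ii) is a direct invocation of that theorem.

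The only genuinely delicate input is internal to \cite[Theorem~4.9]{DebeMiglStab}, which we use as a black box: its proof must first confine the perturbed iterates to a fixed ball -- this is where boundedness of $E$ and $F$ is essential, together with the convergence of the displacement vectors of $(A_n,B_n)$ to $v$ -- and then run a quasi-Fej\'er estimate whose error terms are controlled by the Attouch--Wets discrepancies $\mathrm{D}_{\mathrm{H},r}(A_n,A)$ and $\mathrm{D}_{\mathrm{H},r}(B_n,B)$ on that ball, so that bounded regularity (equivalently, by Proposition~\ref{prop: regular}, regularity) can be applied to conclude $\dist(a_n,E)\to 0$. Within that theorem this confinement-plus-quasi-Fej\'er step is the main obstacle; outside it, everything is bookkeeping.
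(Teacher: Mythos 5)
Your proposal is correct and follows essentially the same route as the paper, which gives no proof of its own for this statement but simply derives (i) from \cite[Theorem~3.7]{BauschkeBorwein93} and (ii) from \cite[Theorem~4.9]{DebeMiglStab}. Your additional sketch for (i) — Fej\'er monotonicity with respect to $E$ via Fact~\ref{fact: BB93}, firm nonexpansivity giving $\|(b_n-a_n)-v\|\to 0$, regularity forcing $\dist(a_n,E)\to 0$, and the Fej\'er/Cauchy argument — is a faithful reproduction of the cited Bauschke--Borwein argument, and (ii) is, as in the paper, a direct invocation of the quoted theorem.
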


	%-------------------------------------------------------%
	%                                                       %
	% 				MAIN results 					%
	%                                                       %
	%-------------------------------------------------------%

	\section{Main Results}\label{sec: main}
	In this section, our aim is to prove Theorem~A. To this end, we shall need some intermediate technical lemmas, which we are going to obtain progressively. Let us start with the following important intermediate result that provides a framework under which a sequence generated by the alternating projection method remains in a 2-dimensional space. 
\begin{lemma} [keeping iterations in a 2-dim space] \label{lemma: geometrical fact} Let $\gamma\geq0$, let $z,w$ be two nonzero elements of a Hilbert space $\H$ such that $\langle z, w\rangle=0$ and define:
		\begin{equation}\textstyle 
			u=\frac{z}{\|z\|}, \quad \alpha :=\frac{\|z\|}{\|w\|}, \quad x\mapsto f(x)=\langle u,x\rangle\quad, \quad x\mapsto \widehat{f}(x)=\langle u+\alpha\frac{w}{\|w\|},x\rangle .
		\end{equation}
		Let further $\widehat{A},\widehat{B}$ be nonempty closed convex subsets of $\H$ such that
		\begin{equation}
			[z,w]\, \subseteq \widehat A \, \subseteq \left[\widehat{f}\geq\widehat{f}(w)\right]\qquad\text{and}\qquad [-\gamma u,w-\gamma u]\subseteq \widehat B\subseteq[f\leq -\gamma].
		\end{equation}
		Then starting from $a_0=0$ the alternating projection method generates sequences
$$a_n=\IPA{\widehat A}{\widehat B}{0}{n},\quad b_n=P_{\widehat B}(a_{n-1}),\quad n\geq 1, \quad  $$
such that if $\gamma=0$ we have:
%$\{a_n\}_{n\geq 1}\subset %[z,w]$, $\{b_n\}_{b\geq 1}\subset [0, w]$ and 
$$\underset{n\to\infty}{\lim} a_n = \underset{n\to\infty}{\lim} b_n = w \in \widehat{A}\cap \widehat{B},$$ 
while if $\gamma\neq 0$, there exists $m\in\N\cup\{0\}$
satisfying $$\|a_m-w\|\leq \,\gamma\,\alpha\, \frac{\|w-z\|}{\|w\|}.$$
	\end{lemma}

	\begin{figure}[!ht]
		\begin{center}
			\begin{tikzpicture}
			contenu    % Definizione dei punti principali
			\coordinate (O) at (0.5283,2);
			\coordinate (Z) at (0,4);
			\coordinate (W) at (7,2);
			\coordinate (BL) at (0,0);
			\coordinate (BR) at (7,0);
			\coordinate (ZL) at (-2,4.5714);
			\coordinate (ZR) at (9,1.4285);
			\coordinate (OL) at (2,-0.5714);
			\coordinate (OL1) at (-2,2);
			\coordinate (BL1) at (-2,0);
			\coordinate (BR1) at (9,0);
			\coordinate (OL2) at (-0.5714,2);
			\coordinate (B*) at (6.4286,0);
			\coordinate (A*) at (6.4286,2.1633);
			
			% Punti calcolati con angoli esattamente di 90 gradi
			\coordinate (A1) at (0.5283, 3.8491);
			\coordinate (B1) at (0.5283, 0);
			\coordinate (A2) at (1.5450, 3.5586);
			\coordinate (B2) at (1.5450, 0);
			\coordinate (A3) at (2.4850, 3.2900);
			\coordinate (B3) at (2.4850, 0);
			\coordinate (A4) at (3.3541, 3.0417);
			\coordinate (B4) at (3.3541, 0);
			\coordinate (B4half) at (3.3541, 1.5);
			
			% Disegna il contorno principale
			\draw[thick] (B1) -- (A1) -- (W) -- (BR) -- cycle;
			\draw[thick] (O) -- (W);
			
			%B*A*
			\draw[dotted] (A*) -- (B*) -- (W);
			
			%Retta [\hat{f}=hat{f}(w)]
			\draw[dashed] (ZL) -- (A1);
			\draw[dashed] (W) -- (ZR);
			
			%retta [f=0]
			\draw[dashed] (OL1) -- (O);
			\draw[dashed] (W) -- (9,2);
			
			%retta [f=-\gamma]
			\draw[dashed] (BL1) -- (BL);
			\draw[dashed] (BR) -- (BR1);
			
			%segmento esterno
			%\draw[gray,dashed] (OL2) -- (Z);

			% Segmenti rossi in sequenza
			%\draw[gray, thick] (O) -- (A1);
			%\draw[thick] (A1) -- (B1);
			\draw[gray, thick] (B1) -- (A2);
			\draw[gray, thick] (A2) -- (B2);
			\draw[gray, thick] (B2) -- (A3);
			\draw[gray, thick] (A3) -- (B3);
			\draw[gray, thick] (B3) -- (A4);
			
			Half spaces%
			\draw [fill=gray, opacity=0.5] (ZL) -- (ZR) -- (9,3.4285) -- (-2,6.5714);
			\draw [fill=gray, opacity=0.5] (BL1) -- (BR1) -- (9,-2) -- (-2,-2);
			\node at (5.2,3.5) {$[\hat{f}\geq\hat{f}(w)]$};
			\node at (3.5,-1) {$[f\leq -\gamma]$};
			
			% Segmento verticale rosso tratteggiato da A4
			\draw[gray, thick, dashed] (A4) -- (B4half);
			
			% Angolo theta
			\draw pic[draw, angle radius=1.2cm, angle eccentricity=1.8] {angle=Z--W--O};

			% Etichette principali
			\node[left, yshift=-0.3cm, xshift=-0.1cm] at (O) {$a_0=0$};
			%\node[above] at (Z) {$z$};
			\node[above] at (W) {$w$};
			\node[below, xshift=-0.2cm] at (B1) {$-\gamma u=b_1$};
			\node[below, xshift=0.5cm] at (BR) {$w-\gamma u$};
			%\node[below, xshift=-0.5cm, yshift=0.03cm] at (OL2) {$-\alpha^2w$};
			\node[above, xshift=0.5cm, yshift=-0.1cm] at (ZL) {$\mathbf{t}$};
			\node[above, xshift=0.5cm] at (OL1) {$[f=0]$};
			\node[above, xshift=0.5cm] at (BL1) {$\mathbf{h}$};
			\node[above] at (A*) {$a^*$};
			\node[below] at (B*) {$b^*$};
						\node[below] at (B2) {$b_2$};

            \node[above] at (A1) {$z$};
            \node[above] at (A3) {$a_2$};
			\node[anchor=west, yshift=0.15cm, xshift=-0.2cm] at ($(A2)+(0,0.15cm)$) {$a_1$};
            \node at (5.3,2.225) {$\theta$};
			
			\end{tikzpicture}
		\end{center}
	\caption{Proof of Lemma~\ref{lemma: geometrical fact}}\label{figure: projections}
	\end{figure}

	\begin{proof}
    Let us denote by $\mathbf{h}$ the  half-line with origin $w-\gamma u$ and passing through the point $-\gamma u$, and by $\mathbf{t}$ the line passing   through  $z$ and $w$. Then denote by $b^*$ the unique point on $\mathbf{h}$  such that $P_{\mathbf{t}}(b^*)=w$. Let us also denote by $a^*$ the unique point on $\mathbf{t}$ such that $b^*=P_{\mathbf{h}}(a^*)$.
    If we define $\theta=\arctan\alpha$, since  $$\frac{\gamma}{\|b^*-w\|}=\cos\theta = \frac{1}{\sqrt{1+\alpha^2}} = \frac{\|w\| }{\sqrt{\|w\|^2+\|z\|^2}} = \frac{\|w\|}{\|w-z\|},$$ 
    we obtain
        \begin{eqnarray*}
         \| a^* -w\| = \underbrace{\left( \frac{\|z\|}{\|w\|} \right)}_{\alpha:=\tan\theta} \| b^* - w\|=\alpha\,\gamma \,\, \frac{\|w-z\|}{\|w\|}. 
        \end{eqnarray*}
 Note that if $b^*\not\in[-\gamma u, w-\gamma u]$ then we necessarily have $z\in[a^*,w]$ and hence
    $$\|a_0-w\|=\|w\|\leq\|z-w\|\leq\|a^*-w\|=\gamma\,\alpha\, \frac{\|w-z\|}{\|w\|},$$
    and our conclusion holds with $m=0$. Hence, we can suppose without loss of generality that $b^*\in[-\gamma u, w-\gamma u]$. \smallskip\newline
 Observe that $$P_{[f\leq-\gamma]}(z)=P_{[f=-\gamma]}(z) =-\gamma u\quad \text{and}\quad P_{[f\leq-\gamma]}(w)=P_{[f=-\gamma]}(w) =w-\gamma u.$$ Since the projection to the affine subspace $[f=-\gamma]$ is an affine function (Fact~\ref{fact: carabinieri per proiezioni}($a$)), for every $x\in [z,w]$, we have $P_{[f\leq-\gamma]}(x)\in [-\gamma u, w-\gamma u]$. Therefore, we infer from Fact~\ref{fact: carabinieri per proiezioni}(b) that, for every $x\in [z,w]$, $P_{\widehat B}(x)\in [-\gamma u, w-\gamma u]$. A similar argument shows that, for every $x\in [-\gamma u, b^*]$, we have $P_{\widehat A}(x)\in [z,w]$.\medskip\newline
     We now consider two cases: \smallskip\newline
    (i) If $\gamma =0$, then $w=b^*\in \widehat{A}\cap \widehat{B}$. Then, setting $\tilde{A}:=[z,w]$ and $\tilde{B}:=[0,w]$ we deduce that 
	$a_n:=\IPA{\widehat{A}}{\widehat{B}}{a_0}{n}=\IPA{\tilde{A}}{\tilde{B}}{a_0}{n}$.
	Since $\IPA{\tilde{A}}{\tilde{B}}{a_0}{n}\rightarrow w$ as $ n\to \infty$ (see, e.g., \cite[Fact~1.2]{BauschkeBorwein93}), the assertion follows.
\medskip\newline
    (ii) If $\gamma\neq 0$ then $b^*\neq w-\gamma u$, that is, we are in the situation described in Figure~\ref{figure: projections}. Notice that,  as long as the elements $b_n:=P_{\widehat B}(a_{n-1})$ are not in the segment $( b^*, w-\gamma u]$, their projections $a_n:=P_{\widehat A}(b_n)$ are in the segment $[z,w]$, and these projections lie in the two-dimensional subspace determined by the points $0,z,w$. 
 Now, let {$m\in\N\cup\{0\}$} be the first non-negative integer such that  $b_{m+1}:=P_{\widehat B}(a_{m}) \in [b^*,w-\gamma u]$.
    Then we easily get 
    $$\| a_m - w\| \leq \| a^* -w\|= \gamma\,\alpha\, \frac{\|w-z\|}{\|w\|},$$
the proof is complete.  \end{proof}

   \textit{Remark.} Figure~\ref{figure: projections} and its role in the proof of Lemma~\ref{lemma: geometrical fact} were inspired by \cite[Fig.~3]{Kop08}.\medskip
	
	\noindent We shall now consider two nonempty closed convex sets $A$ and $B$ that are separated by a functional and show that, provided one of them is bounded, we can perturb the sets to obtain a setting in which the previous lemma can be applied.
	
	\begin{lemma}[Steering the iterations]\label{lemma:  black box}
		Let $A,B$ be closed nonempty convex subsets of $\H$ such that for some $f\in \mathbb{S}_{\H^*}$, $\beta\in \R$, $\gamma\geq0$,  and $r\geq 1$ we have:
		\[
		\sup f(B)\leq \beta -\gamma\leq\beta\leq \inf f(A) \qquad \text{and} \qquad A\subset r \mathbb{B}_{\H}\,.
		\]
		Let further $p,w\in [f=\beta]$, $p\neq w$ and suppose that for some $\delta>0$
		\[
		\left( p+\delta\mathbb{B}_{\H}\right)\cap A\neq \emptyset,\quad \left( w+\delta\mathbb{B}_{\H}\right)\cap A\neq \emptyset,\quad \left( p-\gamma u+\delta\mathbb{B}_{\H}\right)\cap B\neq \emptyset,\quad \left( w-\gamma u+\delta\mathbb{B}_{\H}\right)\cap B\neq \emptyset,
		\]
        where $u\in \mathbb{S}_\H$ represents the functional $f$.\smallskip\newline
		Then there exist $m\in \N\cup\{0\}$ and  closed convex sets $\widehat{A},\widehat{B}$ such that 
        $$\mathrm{D}_{\mathrm{H}}(A,\widehat{A})\leq 3\delta\,,\quad \mathrm{D}_{\mathrm{H}}(B,\widehat{B})\leq 3\delta $$
        and such that, for $a_0:=p$,\, $a_n:= \IPA{\widehat{A}}{\widehat{B}}{a_0}{n}$ and $b_{n}:= P_{\widehat B}(a_{n-1})$ ($n\in\N$), we have
        \begin{enumerate}
            \item the finite sequence $\left\{b_{n}\right\}_{n= 1}^{m+1}$ is contained in $[f=\beta - \gamma]$;
            \item $ a_{m} :=  \IPA{\widehat{A}}{\widehat{B}}{a_0}{m} \in \left( w+\delta \mathbb{B}_{\H} \right) \cap \widehat A.$ 
        \end{enumerate}
        \end{lemma}
           
  \begin{figure}[!ht]
		\begin{center}
			\begin{tikzpicture}[scale=1]
			
			%Set A
			\fill [gray, opacity=0.5] (0,0)--(-2,2)--(6,2)--(4,0);
			\draw[black,thick] (0,0) -- (4,0);
			\draw [fill=gray, opacity=0.5] (-2,2) arc(180:270:2);
			\draw [thick] (-2,2) arc(180:270:2);
			\draw [fill=gray, opacity=0.5] (4,0) arc(270:360:2);
			\draw [thick] (4,0) arc(270:360:2);
			\draw [fill=gray, opacity=0.5] (6,2) arc (0:180:4 and 2);
			\draw [thick] (6,2) arc (0:180:4 and 2);
			\node at (2,2) {$A$};
			
			%Set A+3B
			\draw[gray,thick] (0,-3) -- (4,-3);
			\draw [thick] (-5,2) arc(180:270:5);
			\draw [thick] (4,-3) arc(270:360:5);
			\draw [thick] (9,2) arc (0:180:7 and 5);
			
			%[f=0]
			\draw[gray,dashed] (-7,-0.5) -- (11,-0.5);
			\node at (-6.5,-1){$[f=\beta=0]$};
			
			%[\hat{f}=\hat{f}(w)]
			\draw [dashed] plot [domain=-7:11] (\x, {-0.3*\x+1});
			\node at (-6.5,2.2){$[\hat{f}=\hat{f}(w)]$};
			
			%B centered at p=0 (-1.3,-0.5)
			\draw[dashed,thin] (-0.3,-0.5) arc(0:360:1);
			\node[left,below] at (-1.3,-0.5) {\small{$p=0$}};
			\draw [<->, latex-latex] (-1.3,-0.5) -- (-2.2,0);
			\node at (-1.7,0){\small{$\delta$}};
			
			%B centered at w (5,-0.5)
			\draw[dashed,thin] (6,-0.5) arc(0:360:1);
			\node[left,above] at (5,-0.5) {\small{$w$}};
			\draw [<->, latex-latex] (5,-0.5) -- (4.1,-1);
			\node at (4.6,-1){\small{$\delta$}};

%			%Set B +3B
			\draw[black,thick] (-0.2,-4) -- (3.8,-4);
			\draw [thick] (-0.2,-4) arc(90:180:5);
			\draw [thick] (3.8,-4) arc(-270:-360:5);
			
			%Set B
			\fill[gray,opacity=0.5] (-0.2,-7)--(3.8,-7)--(5.8,-9)--(-2.2,-9);
			\draw[black,thick] (-0.2,-7) -- (3.8,-7);
			\draw[fill=gray, opacity=0.5] (-0.2,-7) arc(90:180:2);
			\draw [thick] (-0.2,-7) arc(90:180:2);
			\draw[fill=gray, opacity=0.5] (3.8,-7) arc(-270:-360:2); 
			\draw [thick] (3.8,-7) arc(-270:-360:2);
			\node at (2,-8) {$B$};
			
			%[f=-\gamma]
			\draw[gray,dashed] (-7,-6.5) -- (11,-6.5);
			\node at (-6.5,-7){$[f=-\gamma]$};
			
			%[p,-\gamma u]
			\draw[gray,dashed] (-1.3,1.24) -- (-1.3,-6.5);
			
			%B centered at -\gamma u (-1.3,-6.5)
			\draw[dashed,thin] (-0.3,-6.5) arc(0:360:1);
			\node[below] at (-1.3,-6.5) {\small{$-\gamma u$}};
			\draw [<->, latex-latex] (-1.3,-6.5) -- (-2.2,-6);
			\node at (-1.7,-6){\small{$\delta$}};
			
			%[w,w-\gamma u]
			\draw[gray,dashed] (5,-0.5) -- (5,-6.5);
			
			%B centered at w-\gamma u (5,-6.5)
			\draw[dashed,thin] (6,-6.5) arc(0:360:1);
			\node[below] at (5,-6.5) {\small{$w-\gamma u$}};
			\draw [<->, latex-latex] (5,-6.5) -- (5.9,-6);
			\node at (5.4,-6){\small{$\delta$}};
			
			%\widehat{B}
			\pattern[pattern=checkerboard light gray,opacity=0.1](-4.55,-6.5) -- (8.15,-6.5) -- (8.15,-9) -- (-4.55,-9);
			\draw[black,ultra thick] (-4.55,-6.5) -- (8.15,-6.5);
			\pattern[pattern=checkerboard light gray,opacity=0.1](-4.55,-6.5) arc(150:180:5);
			\pattern[pattern=checkerboard light gray,opacity=0.1](-4.55,-6.5) -- (-4.55,-9) -- (-5.2,-9);
			\draw [ultra thick] (-4.55,-6.5) arc(150:180:5);
			\pattern[pattern=checkerboard light gray,opacity=0.1](8.15,-6.5) arc(-330:-360:5);
			\pattern[pattern=checkerboard light gray,opacity=0.1](8.15,-6.5) -- (8.15,-9) -- (8.8,-9);
			\draw [ultra thick] (8.15,-6.5) arc(-330:-360:5);
			\node at (-3,-8) {$\widehat{B}$};

			%\hat{A}
			\pattern[pattern=checkerboard light gray,opacity=0.1](9,2) arc (0:175:7 and 5);
			\draw [ultra thick] (9,2) arc (0:175:7 and 5);
			\pattern[pattern=checkerboard light gray,opacity=0.1](9,2) arc(0:-42:5);
			\draw [ultra thick] (9,2) arc(0:-42:5);
			\pattern[pattern=checkerboard light gray,opacity=0.1](9,2) -- (7.7,-1.35) -- (-5.127,2.465);
			\draw [ultra thick] plot [domain=-5:7.75] (\x, {-0.3*\x+1});
			\node at (-3,4) {$\widehat{A}$};
			
			%distances
			\draw [<->, latex-latex] (6,2) -- (9,2);
			\node at (7.5,2.5){$3\delta$};
			\draw [<->, latex-latex] (2,-7) -- (2,-4);
			\node at (2.5,-5.5){$3\delta$};		
			\end{tikzpicture}
		\end{center}
		\caption{Construction of $\widehat{A}$ and $\widehat{B}$}\label{blackbox}
	\end{figure}
            
	\begin{proof}
		Without loss of generality, we may assume that $p=0$ and consequently $\beta=0$, $\langle u, w\rangle =0$, $w\in \ker f$ and $A\subset [f\geq 0]$. We set:
		\[
		\widehat{B}=\left(B+3\delta \mathbb{B}_{\H}\right)\cap \left[f\leq -\gamma\right].
		\] Since $\mathrm{D}_{\mathrm{H}}(B,B+3\delta \mathbb{B}_{\H})\leq 3\delta$ and $B\subseteq [f\leq -\gamma]$, we have $B\subset\widehat{B}\subset B+3\delta \mathbb{B}_{\H}$ and therefore
		\[
		\mathrm{D}_{\mathrm{H}}(B,\widehat{B})\leq 3\delta.
		\]
		Set a positive number $\epsilon_0$ so that 
        $$\varepsilon_0\leq\min\left\{1,\|w\|\right\} \qquad \text{(the exact value of $\varepsilon_0$ will be fixed later)}.$$
        Let $u\in \mathbb{S}_\H$ representing the functional $f\in \mathbb{S}_{\H^*}$ (that is, $f(\cdot)=\langle u,\cdot\rangle$) and define: 
		\begin{equation}\label{eq:theta}
			\widehat{f}(x)= \langle u+\alpha\frac{w}{\|w\|} ,x\rangle = f(x)+\alpha\langle \frac{w}{\|w\|},x\rangle,\quad x\in\H, \qquad\text{where } \quad \alpha=\frac{\delta \varepsilon_0}{r\|w\|}.
		\end{equation}
		Since $\|u\|=1$ and $\langle u,w\rangle=0$, we have $\|\widehat{f}\|>1$. Moreover, for every $a\in A$ we have: 
		\begin{equation}\label{eq:jpp}
			\widehat{f}(a)=f(a)+\alpha \langle \frac{w}{\|w\|},a\rangle \geq -\alpha \|a\|.
		\end{equation}
		We now consider the set 
		$$
		\widehat{A}=\left(A+3\delta \mathbb{B}_{\H}\right)\,\bigcap \,\left[ \widehat{f}\geq \widehat{f}(w)\right].
		$$
		\textit{Claim}. $\mathrm{D}_{\mathrm{H}}(A,\widehat{A})\leq 3\delta$. \medskip\newline        
		\textit{Proof of the claim.} Since $\widehat{A}\subset A+3\delta \mathbb{B}_{\H}$, we have $e(\widehat{A},A)\leq 3\delta$. Let now $a\in A$. If $a \in \left[\widehat{f}\geq \widehat{f}(w)\right]$, then $a\in\widehat{A}$, while if $\widehat{f}(a)<\widehat{f}(w)=\frac{\delta\,\varepsilon_0}{r}=\alpha \|w\|$, we set $$\tilde{a}=P_{\left[\widehat{f}\geq \widehat{f}(w)\right]}(a)=P_{\left[\widehat{f}= \widehat{f}(w)\right]}(a).$$ Then $\widehat{f}(\tilde{a})=\widehat{f}(w)= \delta \varepsilon_0 r^{-1}$. 
        Recalling that $\|a\| \leq r$, for all $a\in A$ and $\varepsilon_0\leq 1\leq r$, we deduce from~\eqref{eq:jpp} that 
		\[
		\left\|\Tilde{a}-a\right\|=\dfrac{\left|\widehat{f}(w)-\widehat{f}(a)\right|}{\|\widehat{f}\|} < \widehat{f}(w)-\widehat{f}(a) \leq \delta\,\left(\frac{\varepsilon_0}{r}\right) + \delta\,\varepsilon_0\,\left(\frac{\|a\|}{r}\right) \leq 2\,\delta.
		\]        
		It follows that $\tilde{a}\in\widehat{A}$, whence
		$e(A,\widehat{A})\leq 2\delta$ and the claim follows. \medskip \newline
		We finally set 
        \begin{equation} \label{eq:tahar}
            z:=\left(\frac{\delta \varepsilon_0}{r}\right)u = \alpha \|w\|\,u\,.
        \end{equation}
  By the definition of the sets $\widehat{A}$ and $\widehat{B}$,  we deduce that 
		\[ \left[z,w\right]\subset \widehat{A}
\qquad\text{and} \qquad 		[-\gamma u,w-\gamma u]\subset \widehat{B} .
		\]
		Therefore, we can apply Lemma~\ref{lemma: geometrical fact} for the sets $\widehat{A}, \widehat{B}$ 
        (note that $\alpha$ is indeed equal to $\frac{\|z\|}{\|w\|}$)
		to conclude that 
		there exists  $m\in\N$ satisfying 
        	$$\|\underbrace{\IPA{\widehat A}{\widehat B}{a_0}{m}}_{a_m}-w\| \leq \,\gamma\,\frac{\|z\|}{\|w\|}\, \frac{\|w-z\|}{\|w\|} \leq\left(\gamma \frac{\|w\|+\|z\|}{\|w\|^2}\,\frac{\varepsilon_0}{r}\,\right)\delta\,.$$ 
    Shrinking the value of $\varepsilon_0$ and recalling~\eqref{eq:tahar} we ensure that the above quantity is less or equal to $\delta$. The proof is complete.
	\end{proof}
	\medskip
    
	\noindent We now establish our main result under the additional assumption that the sets $A$ and $B$ are disjoint (and the best approximation sets $E$ and $F$ are nonempty). This assumption considerably simplifies the proof, allowing us to outline the geometrical features of the argument. \smallskip\newline
	Given a pair $(A,B)$ of nonempty closed convex sets in $\H$, we recall from~\eqref{eq:E,F} the definition of the best approximation sets $E, F$ and from~\eqref{eq:displ} the definition of the  displacement vector $v:=P_{\overline{B-A}}(0)$. We start with the following technical lemma.
	\begin{lemma}[lack of regularity]\label{lem: separated case}
		Let $A,B$ be two closed convex sets in~$\H$. Assume that $E,F$ are nonempty and disjoint and that the pair $(A,B)$ is not regular. \newline Then there exist $\varepsilon_0>0$, a sequence $\{\delta_n\}_{n\geq 1}\subset (0,+\infty)$ with $\delta_n\to 0$, and a sequence $\{w_n\}_{n\geq 1} \subset\H$ such that for every $n\in\N$ we have:
        \begin{enumerate}
            \item $	\varepsilon_0 < \dist(w_n, E) \leq  \varepsilon_0 +1 $; \item
            $\max\Big\{ \dist(w_n, A), \,\dist(w_n, B-v) \Big\}\,\leq \delta_n$;
            \item $\langle v, w_n\rangle=\langle v, \bar a \rangle$ for some (equivalently, for all) $\bar a\in E$.
        \end{enumerate}
	\end{lemma}

    \begin{proof}
		Since the pair $(A,B)$ is not regular, there exist 
		$\{w_n'\}_{n\geq 1}\subset \H$ and 
		$\varepsilon_0>0$ such that 
		\[
		\dist(w_n', E)> 3\varepsilon_0, \quad\text{for all } n\geq 1 \quad \text{ and } \quad \lim_{n\to\infty} \max\left\{\dist(w_n', A),\,\dist(w_n', B-v)\right\} = 0. 
		\]
        Since $E,F$ are nonempty and $F=E+v$, we can separate $A$ and $B-v$ with an affine hyperplane $H$ orthogonal to the displacement vector $v$, containing the set $E$.  
		Then setting $w_n''\coloneqq P_{H}(w_n')$, one has
		$$
		\|w_n'-w_n''\| \leq \max\{ \dist(w_n', A),\dist(w_n', B-v)\},
		$$
		yielding
		$$
		\lim_{n\to\infty} \|w_n'-w_n''\| =0\qquad\text{and}\qquad  \lim_{n\to\infty} \max\left\{\dist(w_n'', A),\,\dist(w_n'', B-v)\right\} = 0. 
		$$
		Then for some $n_0\in\N$ sufficiently large and all $n\ge n_0$ we have:
		\[
		\dist(w_n'', E) \geq \dist(w_n', E) - \|w_n'-w_n''\| >2\varepsilon_0\,.
		\]
		We now pick $w_n\in [w_n'', P_{E}(w_n'')]$ so that 
		$\varepsilon_0<\dist(w_n,E)< \varepsilon_0+1$. Finally, by convexity of the sets $A$ and $B$ we have 
		$$
		\delta_n:=\max\{ \dist(w_n, A),\dist(w_n, B-v)\}\leq  \max\left\{\dist(w_n'', A),\,\dist(w_n'', B-v)\right\}
		$$
		and the proof is complete.
	\end{proof}
	\medskip
	
	We are now ready to establish the result in the special case of the aforementioned assumption that the best approximation sets are (nonempty and) disjoint.
	
	\begin{theorem}[main result: case $A\cap B = \emptyset$]\label{th: separated case}
		Let $A,B$ be nonempty closed convex subsets of $\H$ such that $E,F$ are nonempty and disjoint. 
		If the pair $(A,B)$ is not regular, then $(A,B)$ is not $d$-stable.   
	\end{theorem}
  
\begin{proof}
Notice that $v\neq 0$ (displacement vector) and that we can assume that for any initial point $x$, the alternating projection sequence with respect to the sets $A$, $B$ satisfies
		\begin{equation}\label{eq:ar}
			\lim_{n\to\infty}\,\dist\left(\IPA{A}{B}{x}{n},\, E \right) = 0,
		\end{equation}
because, otherwise, the pair $(A,B)$ will already be not $d$-stable. Moreover, there is no loss of generality to assume $0\in E$. Let $f\in \H^*$ be the linear functional defined by $f(\cdot)\coloneqq \langle \cdot,-v\rangle$. \smallskip\newline
By Lemma~\ref{lem: separated case} (lack of regularity), there exist $\varepsilon_0>0$, a sequence of positive numbers $\{\delta_n\}_n$ with $\delta_n\to 0$ and a sequence 
		$\{w_n\}_{n\geq 1}\subset \ker f$ such that 
		\[
		\dist(w_n,E)> 3\varepsilon_0 \qquad \text{and}\qquad \max\Big\{ \dist(w_n, A), \,\dist(w_n, B-v) \Big\} \,\leq\,\delta_n \,\underset{n\to\infty}{\longrightarrow} \,0\,.
		\]
		We can %clearly
		assume that $\delta_n\leq \varepsilon_0$, for all $n\in \N$. \smallskip\newline
        Take an arbitrary $q_0\in\H$: in view of ~\eqref{eq:ar}, there exist $n_1 \in \N$ and $p_1 \in E$, both depending on $\varepsilon_0$, such that    
\begin{equation} \label{eq:mp}
		\tilde{p}_1\coloneqq \IPA{A}{B}{q_0}{n_1} \in \left(p_1+\varepsilon_0\mathbb{B}_{\H}\right) \cap A\,. %\qquad\text{and}\qquad \IPA{A}{B}{w_0}{n}\in p_0+\varepsilon_0\mathbb{B}_{\H}, \quad\text{for all } \,n\geq 1.
\end{equation}
\noindent We shall now show that for any $r_1>0$ sufficiently large, there exist two closed convex sets  $\widehat{A}_1$ and $\widehat{B}_1$ which are $3\delta_1$-close  (with respect to the Hausdorff distance ${\mathrm{D_H}}$) to the sets $A\cap r_1\mathbb{B}_{\H}$ and $B$, respectively, such that the iterations of the corresponding alternating projection method, starting from $\tilde{p_1}$, bring us to some point $q_1$ sufficiently close to the point $w_1$. \smallskip\newline        
        To this end, we first show that starting from the point $p_1\in \left( \tilde{p_1}+\varepsilon_0\mathbb{B}_{\H}\right)\cap \ker f$
we can get sufficiently close to the point $w_1$: indeed, notice that $p_1\in E\subset \ker f$ and $w_1 \in \ker f$. Moreover, the set $w_1+\delta_1\mathbb{B}_{\H}$ has nontrivial intersection with both sets $A$ and $B-v$. Let  $\tilde{a}_1\in A$ be such that $\|\tilde{a}_1-w_1\|\leq \delta_1$, and pick $r_1 \geq \max\{1,\|\tilde{a}_1\|,\|p_1\|\}$.\smallskip\newline
        Therefore, we can apply Lemma~\ref{lemma:  black box} to the sets $A\cap r_1\mathbb{B}_{\H}$ and $B$, for $$\delta\coloneqq \delta_1 \leq \varepsilon_0,\quad r\coloneqq r_1,\quad \gamma\coloneqq \|v\|,\quad \beta\coloneqq 0$$
        to obtain $m_1\in\N\cup\{0\}$  and closed convex sets $\widehat{A}_1$ and $\widehat{B}_1$ such that $$\mathrm{D}_{\mathrm{H}}(A\,\cap\, r_1\mathbb{B}_{\H},\widehat{A}_1)\leq 3\delta_1, \qquad \mathrm{D}_{\mathrm{H}}(B,\widehat{B}_1)\leq 3\delta_1$$ and 
        \[
		\tilde{w}_1\coloneqq\IPA{\widehat{A}_1}{\widehat{B}_1}{p_1}{m_1}\in w_1+\delta_1\mathbb{B}_{\H}\subset w_1+\varepsilon_0\mathbb{B}_{\H}.
		\]
Starting now the iterations from the point $\tilde{p_1}$ 
satisfying~\eqref{eq:mp}, since $\|p_1-\tilde{p_1}\| \leq \varepsilon_0$ and the projection operators $P_{\widehat{A}_1}, P_{\widehat{B}_1}$ are non-expansive, we deduce
		\[
		q_1\coloneqq \IPA{\widehat{A}_1}{\widehat{B}_1}{\tilde{p}_1}{m_1} \,\in \tilde{w}_1+\varepsilon_0\mathbb{B}_{\H}\,.
		\]
		Consequently, $\|q_1-w_1 \| \leq 2\varepsilon_0$. Notice that since $\dist(w_1,E) > 3 \varepsilon_0$
		we get $\dist(q_1,E) > \varepsilon_0$.
		\smallskip  \newline
		Concatenating the sequences:
		\[
		\{\IPA{A}{B}{q_0}{n}\}_{n\leq n_1} \quad(\text{joining $q_0$ to $\tilde{p_1}$}) \qquad\text{and}\qquad \{\IPA{\widehat{A}_1}{\widehat{B}_1}{\tilde{p_1}}{n}\}_{n\leq m_1} \quad(\text{joining $\tilde{p}_1$ to $q_1$}) 
		\]
		we obtain a finite sequence of variational alternating projections whose last point $q_1$ is outside the set $E +\varepsilon_0\mathbb{B}_{\H}$.
		\smallskip\newline
        Iterating the same argument as above, we can inductively obtain a sequence of sets $\widehat{A}_h,\widehat{B}_h\subset \H$, together with points $q_h,\tilde{p}_h\in \H$,  $p_h\in E$, $ \tilde{a}_h\in A$, positive numbers $r_h$, and non-negative integers $n_h,m_h$ such that, for every  $h\in \N$, we have:
		\begin{enumerate}
			\item $\tilde{p}_h=\IPA{A}{B}{q_{h-1}}{n_h}$ and $\|p_h-\tilde{p}_h\|<\epsilon_0$;\smallskip
			\item $\|\tilde{a}_h-w_h\|\leq \delta_h$ and $\max\{\|p_h\|,\|\tilde{a}_h\|,h\}\leq r_h$,\smallskip
			\item\label{HausDist_A} $ \mathrm{D}_{\mathrm{H}}\left(A\cap r_h \mathbb{B}_{\H},\widehat{A}_h\right)\leq 3\delta_h$ and $\mathrm{D}_{\mathrm{H}}\left(B,\widehat{B}_h\right)\leq 3\delta_h$;\smallskip
			\item\label{iteration_A} $q_{h}=\IPA{\widehat{A}_h}{\widehat{B}_h}{\tilde{p}_h}{m_h}=\IPA{\widehat{A}_h}{\widehat{B}_h}{\IPA{A}{B}{q_{h-1}}{n_h}}{m_h}$ and $\dist(q_h,E) \geq \varepsilon_0$. 
		\end{enumerate}
        To prove that the pair $(A,B)$ is not 
		$d$-stable, let us consider the sequence $\{A_k\}_k$ of closed convex sets in $\H$ defined by  
		$$\left\lbrace\underbrace{A,\ldots,A}_{n_1\text{ times}},\underbrace{\widehat{A}_1,\ldots,\widehat{A}_1}_{m_1\text{ times}},\underbrace{A,\ldots,A}_{n_2\text{ times}},\underbrace{\widehat{A}_2,\ldots,\widehat{A}_2}_{m_2\text{ times}},\underbrace{A,\ldots,A}_{n_3\text{ times}},\underbrace{\widehat{A}_3,\ldots,\widehat{A}_3}_{m_3\text{ times}},\ldots\right\rbrace$$
		and the sequence $\{B_k\}_k$ of closed convex sets in $\H$ defined by the same process: 
		$$\left\lbrace\underbrace{B,\ldots,B}_{n_1\text{ times}},\underbrace{\widehat{B}_1,\ldots,\widehat{B}_1}_{m_1\text{ times}},\underbrace{B,\ldots,B}_{n_2\text{ times}},\underbrace{\widehat{B}_2,\ldots,\widehat{B}_2}_{m_2\text{ times}},\underbrace{B,\ldots,B}_{n_3\text{ times}},\underbrace{\widehat{B}_3,\ldots,\widehat{B}_3}_{m_3\text{ times}},\ldots\right\rbrace$$
		It follows from~\eqref{HausDist_A} and Fact~\ref{fact: AW-intersection balls} that the sequences $\{\widehat{A}_h\}_h$ and $\{\widehat{B}_h\}_h$ Attouch-Wets converge to the sets $A$ and $B$ respectively. 
		Therefore, so do the sequences $\{{A}_k\}_k$ and $\{{B}_k\}_k$ (in which finite segments of constant sequences $A_k=A$ and $B_k=B$ respectively, are interposed). Consequently, considering the perturbed alternating projection sequences $\{a_h\}$ and $\{b_h\}$, with respect to $\{A_h\}_{h}$ and $\{B_h\}_{h}$ and with starting point $q_0$, we infer from~\eqref{iteration_A} that:
		\[
		a_{k_N}=q_{N+1} \qquad\text{where} \quad k_N=\sum_{h=1}^N (n_h+m_h).
		\]
		Since $k_N\to \infty$ as $N\to\infty$ and $\dist\left(q_{N+1},E\right)\geq \epsilon_0$, we deduce that the pair $(A,B)$ is not $d$-stable. The proof is complete.		
	\end{proof}
	\medskip
	
	%\noindent\textit{Remark:}
	%Theorem~\ref{th: separated case} still holds in the case where $A\cap B=\emptyset$ and the best approximation sets $E$ and $F$ are nonempty and bounded: in fact, the separation is now provided by the hyperplane which is orthogonal to the displacement vector $v$ (see~\eqref{eq:displ}) and a similar procedure to the above can be applied. Moreover, proceeding as in the proof of forthcoming Theorem~\ref{th: general case}, it is possible to omit the assumption that $A$ is bounded.
	%\bigskip
	
	\noindent We shall now establish the main result of our paper in the case where $A \cap B$ is nonempty. For the proof, we need to recall the notion of algebraic interior of a set. 
	\begin{definition}\label{def: algebraic interior} Let $C\subseteq \H$ be a nonempty set. The {\em algebraic interior} of $C$, denoted by $\mathrm{alg\,int\, } C$, is the set of all points $c\in C$ such that for every $u\in \mathbb{S}_{\H}$ there exists $\varepsilon_u>0$ such that $[c,c+\varepsilon_u u)\subseteq C$.  
	\end{definition}
	
	\noindent We recall that a set $C\subseteq \H$ is called {\em absorbing} if for every $x \in \H $, there exists a positive number $\alpha$ such that $x\in tC$ whenever 
	$t >\alpha$. It is easy to see that $x\in \mathrm{alg\,int\, } C$ if and only if $C-x$ is absorbing. Moreover, if $C$ is a convex set such that $\inte C\neq \emptyset$ then, 
	$\mathrm{alg\,int\, } C=\inte C$. %\emi{find a reference?}
	\smallskip\newline
	We also need the following well-known corollary of Baire category theorem.
	
	\begin{fact}[{see, e.g., \cite{AliprantisBorder94}*{Corollary~3.28}}]\label{fact: Baire} If a complete metric space is a countable union of closed sets, then at least one of them has a nonempty interior.  
	\end{fact}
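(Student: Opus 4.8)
\noindent\emph{Proof strategy.} The plan is to deduce this statement directly from the Baire Category Theorem, which asserts that a nonempty complete metric space is a Baire space: any countable intersection of dense open subsets is dense, or, equivalently, the space cannot be written as a countable union of nowhere dense sets. Since the Fact is a standard textbook consequence of completeness, the proof is short and there is no genuine obstacle; I record it here only for the sake of self-containedness.

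First I would argue by contradiction. Assume $X$ is a nonempty complete metric space with $X=\bigcup_{n\geq 1}F_n$, where each $F_n$ is closed and $\inte F_n=\emptyset$. Because $F_n$ is closed, the condition $\inte F_n=\emptyset$ is equivalent to saying that $U_n:=X\setminus F_n$ is open and dense in $X$; indeed $\overline{U_n}=X\setminus \inte F_n=X$. Applying the Baire Category Theorem to $X$, the set $\bigcap_{n\geq 1}U_n$ is dense in $X$, hence nonempty. On the other hand $\bigcap_{n\geq 1}U_n=X\setminus\bigcup_{n\geq 1}F_n=X\setminus X=\emptyset$, a contradiction. Therefore at least one $F_n$ must have nonempty interior.

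If one prefers to avoid invoking the named theorem, the same contradiction can be obtained via the classical nested-ball construction: supposing again that every $F_n$ has empty interior, one selects inductively closed balls $\overline{B}(x_k,r_k)\subseteq \overline{B}(x_{k-1},r_{k-1})$ with $r_k\to 0$ and $\overline{B}(x_k,r_k)\cap F_k=\emptyset$, which is possible precisely because each $F_k$ is closed with empty interior, so its complement meets every ball in a nonempty relatively open set. Completeness of $X$ then furnishes a point $x_\ast\in\bigcap_{k\geq 1}\overline{B}(x_k,r_k)$, and by construction $x_\ast\notin F_k$ for every $k$, contradicting $X=\bigcup_{k\geq 1}F_k$.

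The only points requiring a little care are the equivalence between ``$F_n$ closed with empty interior'' and ``$X\setminus F_n$ open and dense'', and the verification that the radii in the ball construction can be chosen shrinking to $0$ so that completeness delivers the limit point in the intersection. In the context of this paper the Fact will be applied to the complete metric space $\H$ (or a closed subset thereof), which is nonempty, so the degenerate case $X=\emptyset$ never occurs.
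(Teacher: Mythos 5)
Your proof is correct: the paper does not prove this Fact at all but simply cites \cite[Corollary~3.28]{AliprantisBorder94}, and your argument (Baire Category Theorem applied to the dense open complements, or equivalently the nested-ball construction) is precisely the standard proof behind that reference. Your remark about the degenerate empty-space case is also apt, since in the paper the Fact is only applied to the nonempty complete space $\H$, so nothing further is needed.
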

	\begin{theorem}[main result: case $A\cap B \neq \emptyset$]\label{th: general case} 
	Let $A,B$ be closed convex subsets of $\H$ such that $A\cap B$ is nonempty. Suppose that the pair $(A,B)$ is not regular. 
    Then the pair $(A,B)$ is not $d$-stable.	
	\end{theorem}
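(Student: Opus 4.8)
\noindent The plan is to transplant the architecture of the proof of Theorem~\ref{th: separated case} to the general situation, where $A$ and $B$ need not be separated by a functional and $A$ need not be bounded. As a preliminary reduction, if the \emph{unperturbed} alternating projection method fails to converge for some initial point, then $(A,B)$ is already not $d$-stable and we are done; so assume that for every $a_0\in\H$ the sequence $\{\IPA{A}{B}{a_0}{n}\}_n$ converges in norm to a point of $A\cap B$. (Here $A\cap B\neq\emptyset$ forces $v=0$ and $E=F=A\cap B$.) Following the proof of Lemma~\ref{lem: separated case} --- verbatim, except that the step projecting onto $\ker f$ is no longer available and is simply skipped --- I would use non-regularity to produce $\varepsilon_0>0$, $r>0$, a null sequence $\{\delta_n\}\subset(0,+\infty)$, and a sequence $\{u_n\}\subset(A\cap B)+r\mathbb{B}_{\H}$ such that $3\varepsilon_0<\dist(u_n,A\cap B)<3\varepsilon_0+1$ and $\max\{\dist(u_n,A),\dist(u_n,B)\}\le\delta_n$ for every $n$. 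The goal is then to assemble sequences $\{A_n\},\{B_n\}$ of closed convex sets, Attouch--Wets convergent to $A$ and $B$, whose perturbed alternating projection sequence visits $u_n+\varepsilon_0\mathbb{B}_{\H}$ for infinitely many $n$; since $\dist(u_n,A\cap B)>3\varepsilon_0$, the resulting iterates stay at distance $>\varepsilon_0$ from $E=A\cap B$ infinitely often, contradicting $d$-stability.

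\noindent The core of the argument will be a replacement for the ``black box'' of Lemma~\ref{lemma:  black box} that dispenses with the separation hypothesis $\sup f(B)\le\beta\le\inf f(A)$: given a point $p$ in a small neighbourhood of $A\cap B$ and a target $u_n$, I want closed convex sets $\widehat A_n,\widehat B_n$ whose Hausdorff distance to $A$, respectively $B$, is of order $\delta_n$ \emph{on every ball}, and such that the alternating projection sequence of $(\widehat A_n,\widehat B_n)$ issued from $p$ stays in a two-dimensional affine slice and converges to $u_n$, so that Lemma~\ref{lemma: geometrical fact} applies. The natural substitute for the separating functional is a \emph{local} one along the corridor joining $q_n:=P_{A\cap B}(u_n)$ to $u_n$: by the variational characterization~\eqref{eq:variat_charact} of $q_n$, the functional $g_n(x):=\langle u_n-q_n,\,x-q_n\rangle$ is $\le 0$ on $A\cap B$ and equals $\|u_n-q_n\|^2>0$ at $u_n$. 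One then builds $\widehat A_n,\widehat B_n$ out of the truncations $A\cap r_k\mathbb{B}_{\H}$ and $B\cap r_k\mathbb{B}_{\H}$ (with $r_k\uparrow\infty$) by enlarging by an $O(\delta_n)$-ball, adjoining the point $u_n$, and clipping by a thin slab around the hyperplane $[g_n=0]$, so as to trap inside the perturbed sets a thin triangle with vertex $u_n$ as in Lemma~\ref{lemma: geometrical fact}. Working with truncations also removes the boundedness assumption on $A$: Attouch--Wets convergence of the assembled sequences to $A$ and $B$ is then obtained from Fact~\ref{fact: AW-intersection balls}.

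\noindent I expect the construction of $\widehat A_n,\widehat B_n$ to be the main obstacle. The clipping that prevents the iterates from escaping back to $A\cap B$ can be performed with $O(\delta_n)$ Hausdorff cost only when the bounded convex set $A\cap B$ is ``thin'' near $q_n$ in the direction $u_n-q_n$ --- equivalently, when $A\cap B$ has a supporting hyperplane at $q_n$ with outer normal $u_n-q_n$. This is automatic when $\inte(A\cap B)\neq\emptyset$, but may fail when $\inte(A\cap B)=\emptyset$, and this is precisely where the algebraic-interior material of Definition~\ref{def: algebraic interior} and Baire's theorem (Fact~\ref{fact: Baire}) enter. Using that, for a closed convex set, the algebraic interior coincides with the topological interior, one organises a case distinction: either $A$ and $B$ admit a genuine separating functional, and one falls back on Theorem~\ref{th: separated case} (now with $A$ possibly unbounded, handled by the truncation device above); or one is in the complementary, degenerate configuration, and the steering construction is carried out inside the closed affine subspace generated by $A\cap B$, where the needed supporting functionals for $A\cap B$ do become available.

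\noindent The final step, the concatenation, is identical to the one in the proof of Theorem~\ref{th: separated case}. Starting from $u_1$, run the genuine method until the iterate first enters $p_1+\varepsilon_0\mathbb{B}_{\H}$ for some $p_1\in A\cap B$; switch to $(\widehat A_1,\widehat B_1)$ and run until the iterate enters $u_2+\varepsilon_0\mathbb{B}_{\H}$, bounding the drift between the ideal trajectory issued from $p_1$ and the actual one issued from the approximation of $p_1$ by $\varepsilon_0$ via non-expansiveness of the metric projections; then switch back to $(A,B)$, and iterate. Interlacing $A$ and the $\widehat A_k$ (respectively $B$ and the $\widehat B_k$) in finite blocks yields sequences $\{A_n\},\{B_n\}$ with $\mathrm{D}_{\mathrm{H},\rho}(A_n,A)\to 0$ and $\mathrm{D}_{\mathrm{H},\rho}(B_n,B)\to 0$ for every $\rho>0$ (again by Fact~\ref{fact: AW-intersection balls}), hence $A_n\to A$ and $B_n\to B$ in the Attouch--Wets sense, while the associated perturbed iterates come within $2\varepsilon_0$ of every $u_n$ and thus do not satisfy $\dist(\cdot,E)\to 0$. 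Therefore $(A,B)$ is not $d$-stable.
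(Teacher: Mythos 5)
Your overall scaffolding (reduce to the case where the unperturbed method converges from every starting point, extract bounded witnesses of non-regularity, steer the iterates towards them by small perturbations, concatenate finite blocks, truncate $A$ by balls $r_k\mathbb{B}_{\H}$ and invoke Fact~\ref{fact: AW-intersection balls} for Attouch--Wets convergence) is exactly the paper's architecture. But the step you yourself flag as ``the main obstacle'' --- how to recover a functional to which the black box of Lemma~\ref{lemma:  black box} (and hence the trapping mechanism of Lemma~\ref{lemma: geometrical fact}) can be applied --- is precisely where your proposal does not work. The functional $g_n(x)=\langle u_n-q_n,\,x-q_n\rangle$ only supports $A\cap B$ at $q_n$; it gives no inequality of the form $\sup f(B)\leq\beta\leq\inf f(A)$, and the two lemmas need the sets themselves, not their intersection, to lie on opposite sides of the hyperplane: otherwise the projections onto the perturbed sets are not confined to the two-dimensional slice. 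Moreover, ``clipping by a thin slab around $[g_n=0]$'' has Hausdorff cost comparable to the extent of $A$ (resp.\ $B$) in the direction $u_n-q_n$, which is in no way $O(\delta_n)$. Your fallback dichotomy also does not close the gap: in the setting of the theorem $\inte(A\cap B)=\emptyset$ is automatic (if $0\in\inte(A-B)$ the pair is boundedly regular, hence regular since $E$ is bounded), so the ``automatic'' branch is vacuous; a non-regular pair with nonempty intersection need not admit any separating functional, so the reduction to Theorem~\ref{th: separated case} is not available in general; and in the remaining ``degenerate configuration'' the witnesses $u_n$ and the sets $A,B$ need not lie in the closed affine hull of $A\cap B$, so supporting functionals of $A\cap B$ inside that subspace give no control whatsoever on $P_A$ and $P_B$. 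As written, the steering construction is therefore not established, and this is the heart of the theorem.

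The idea you are missing is the paper's way of \emph{manufacturing} separation at $O(\delta)$ cost: truncate $A$ to $A_1=A\cap r_1\mathbb{B}_{\H}$, so that $A_1$ is weakly compact and $A_1-B$ is a closed convex set; non-regularity of $(A_1,B)$ gives $0\notin\inte(A_1-B)$ (Bauschke--Borwein), and the Baire argument of Fact~\ref{fact: Baire} upgrades this to $0\notin\mathrm{alg\,int}(A_1-B)$ --- this, not a case distinction on $\inte(A\cap B)$, is what the algebraic-interior material is for. One then gets a unit vector $u_1$ with $A_1\cap(B+\theta u_1)=\emptyset$ for all $\theta>0$; in particular $A_1$ and the translate $B+\delta_1u_1$ are disjoint with $A_1$ weakly compact, so Hahn--Banach separates them, and Lemma~\ref{lemma:  black box} is applied to the \emph{shifted} pair $(A_1,\,B+\delta_1u_1)$, whose distance to $(A_1,B)$ is only $\delta_1$ in the Hausdorff sense, so the Attouch--Wets approximation of $A$ and $B$ survives. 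Without this (or an equivalent) device, your plan stops at the construction of $\widehat A_n,\widehat B_n$.
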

	
	\begin{proof} By assumption, we have $E=F=A\cap B \neq \emptyset$ and $v=0$ (displacement vector). As in the proof of Theorem~\ref{th: separated case}, without loss of generality we may assume that the alternating projection sequence relative to the sets $A$ and $B$ satisfies~\eqref{eq:ar} for any starting point  $x\in\H$. We can also assume that $0\in A\cap B$. \smallskip\newline
		Since the pair $(A, B)$ is not regular, 
		there exist  
		$\epsilon_0\in\left(0,1\right)$ and sequences $\{\delta_n\}_{n\ge 1} \subset (0, \epsilon_0/6)$ with $\delta_n\to 0 $ and $\{w_n\}_n\subset \mathcal{H}% (r-1) \mathbb{B}_{\H}
		$ such that:
		\[
		\dist (w_n, A\cap B)\geq 3\epsilon_0 \qquad \delta_n:=\max\left\{\dist (w_n, A),\dist (w_n, B)\right\}.
		\]
		\noindent Take an arbitrary point $q_0\in \H$. Let $n_1\in \N$ be such that $$\dist\left(\IPA{A}{B}{q_0}{n_1},\,A\cap B\right)<\,\epsilon_0.$$
		Let $p_1\in A\cap B$ be such that 
        $$\tilde{p}_1=\IPA{A}{B}{q_0}{n_1} \in \left(p_1 + \varepsilon_0\mathbb{B}_{\H}\right)\cap A$$
        and let $\tilde{a}_1\in A$ and $\tilde{b}_1\in B$ be such that $\|\tilde{a}_1-w_1\|\leq \delta_1$ and $\|\tilde{b}_1-w_1\|\leq \delta_1$. Choosing $r_1>0$ such that $\max\{\|p_1\|,\|\tilde{a}_1\|,1\}\leq r_1$, let us define the sets  
		\[
		A_1=A\cap r_1\mathbb{B}_{\H} \qquad\text{and }\qquad B_1=B.
		\]
		
		\noindent \textit{Claim.} $0\not\in \mathrm{alg\,int\,}(A_1-B_1).$ 
		\medskip\newline
		\textit{Proof of the claim.} Since the pair $(A,B)$ is not regular, by \cite{BauschkeBorwein93}*{Corollary~4.5}, we have $0\not\in\inte(A-B)$. Therefore, since $A_1-B_1\subset A-B$, we obtain $0\not\in\inte(A_1-B_1)$. Notice further that since $A_1$ is $w$-compact, the set $A_1-B_1$ is closed and convex. Let us suppose, towards a contradiction, that $0\in \mathrm{alg\,int\, }(A_1-B_1)$. Then the set $A_1-B_1$ would be an absorbing set and consequently $\cup_{n=1}^\infty n(A_1-B_1)=\H$. In this setting, Fact~\ref{fact: Baire} would yield that $\inte(A_1-B_1)\neq \emptyset$. Since $(A_1-B_1)$ is convex, we would have $0\in \mathrm{alg\,int\, }(A_1-B_1)=\inte(A_1-B_1)$, a contradiction. Therefore, the assertion of the claim holds. \medskip
		
		\noindent It follows that there exists $u_1\in \mathbb{S}_\H$ such that, for every $\theta>0$, we have $A_1\cap(B_1+\theta u_1)=\emptyset$. In particular, the closed convex sets $A_1$ and $(B_1+\delta_1 u_1)$ are disjoint and $A_1$ is $w$-compact. By the Hahn-Banach theorem there exists  $f_1\in \mathbb{S}_{\H^*}$ and $\beta_1\in\R$ such that
		$$
		\sup  f_1\left(B_1+\delta_1 u_1\right)\leq \beta_1 \leq  \inf  f_1\left(A_1\right).
		$$
		Since $p_1\in A_1$ and $p_1+\delta_1 u_1\in \left(B_1 +\delta_1 u_1\right)$, there exists $s_1\in [p_1,p_1+\delta_1 u_1 ]$ such that $f_1(s_1)=\beta_1$. %Observe that $\|p_1- s_1\|<\epsilon+\delta_1$ and $s_1 \in A\cap B$. 
		It follows that $$(s_1+\delta_1 \mathbb{B}_{\H})\cap A_1\neq\emptyset,\qquad (s_1+\delta_1\mathbb{B}_{\H})\cap \left(B_1+\delta_1u_1\right)\neq\emptyset.$$
		
		\noindent Similarly, %let $a_1\in A_1$ and $b_1\in B_1$ be such that $\|a_1-t_1\|\leq \delta_1$ and $\|b_1-t_1\|\leq \delta_1$. 
		since $\tilde{a}_1\in A_1$ and $\tilde{b}_1+\delta_1u_1\in \left(B_1+\delta_1u_1\right)$, there exists $t_1\in [\tilde{a}_1,\tilde{b}_1+\delta_1 u_1 ]$ such that $f_1(t_1)=\beta_1$. Since 
		$\|\tilde{b}_1+\delta_1 u_1-w_1\|\leq 2\delta_1$ and $\|\tilde{a}_1-w_1\|\leq \delta_1$, by convexity we have  $\|w_1- t_1\|\leq2\delta_1$. Recalling that $\dist(w_1, A_1) <\delta_1$ we obtain 
		$$(t_1+3\delta_1\mathbb{B}_{\H})\cap A_1\neq\emptyset\qquad\text{and}\qquad  (t_1+3\delta_1 \mathbb{B}_{\H})\cap (B_1+\delta_1u_1)\neq\emptyset.$$
		Applying  Lemma \ref{lemma:  black box} to the sets $A_1,B_1+\delta_1 u_1$, for
        $$
        \delta\coloneqq \delta_1,\quad r\coloneqq r_1,\quad \gamma\coloneqq 0,\quad \beta\coloneq \beta_1,
        $$
        we obtain $m_1\in\N\cup\{0\}$ and two (perturbed) closed convex sets $\widehat{A}_1,\widehat{B}_1$ such that 
		$$ \mathrm{D}_{\mathrm{H}}\left(A_1,\widehat{A}_1\right)\leq 9\delta_1\quad\text{and}\quad \mathrm{D}_{\mathrm{H}}\left(B_1+\delta_1u_1,\widehat{B}_1\right)\leq 9\delta_1,  
		$$
		and such that $$\|\IPA{\widehat{A}_1}{\widehat{B}_1}{s_1}{m_1} - t_1\|\leq 3\delta_1.$$ Notice that
		$$ \mathrm{D}_{\mathrm{H}}\left(A\cap r_1 \mathbb{B}_{\H},\widehat{A}_1\right)\leq 9\delta_1\quad\text{and}\quad \mathrm{D}_{\mathrm{H}}\left(B,\widehat{B}_1\right)\leq 10\delta_1.  
		$$
		Since 
		$$\dist\left(t_1,A\cap B\right)\geq \dist\left(w_1,A\cap B\right) -\|w_1-t_1\|\geq3\epsilon_0-2\delta_1>2\epsilon_0+4\delta_1,$$
		we have
		$$\dist\left(\IPA{\widehat{A}_1}{\widehat{B}_1}{s_1}{m_1},A\cap B\right)\geq \dist\left(t_1,A\cap B\right)- \|\IPA{\widehat{A}_1}{\widehat{B}_1}{s_1}{m_1} - t_1\|        
        >2\epsilon_0+4\delta_1-3\delta_1=2\epsilon_0+\delta_1.$$
		Since $\|s_1-\tilde{p}_1\|\leq\epsilon_0+\delta_1$, we have $\|\IPA{\widehat{A}_1}{\widehat{B}_1}{\tilde{p}_1}{m_1}-\IPA{\widehat{A}_1}{\widehat{B}_1}{s_1}{m_1}\|\leq \epsilon_0+\delta_1$ and hence
		$$\dist\left(\IPA{\widehat{A}_1}{\widehat{B}_1}{\tilde{p}_1}{m_1},A\cap B\right)\geq\dist\left(\IPA{\widehat{A}_1}{\widehat{B}_1}{s_1}{m_1},A\cap B\right)-(\epsilon_0+\delta_1)\geq\epsilon_0.$$
		Let us finally set 
		$$q_1=\IPA{\widehat{A}_1}{\widehat{B}_1}{\tilde{p}_1}{m_1}=\IPA{\widehat{A}_1}{\widehat{B}_1}{\IPA{A}{B}{q_0}{n_1}}{m_1}.$$
		\smallskip 
		Iterating the same argument as above, we can inductively obtain a sequence of sets $\widehat{A}_h,\widehat{B}_h\subset \H$, together with points $q_h,\tilde{p}_h\in \H$,  $p_h\in A\cap B$, $ \tilde{a}_h\in A$, $\tilde{b}_h\in B$,  positive numbers $r_h$ and positive integers $n_h,m_h$ such that for every  $h\in \N$, we have:
		\begin{enumerate}
			\item $\tilde{p}_h=\IPA{A}{B}{q_{h-1}}{n_h}$ and $\|\tilde{p}_h-p_h\|<\epsilon_0$;\smallskip
			\item $\|\tilde{a}_h-w_h\|\leq \delta_h$, $\|\tilde{b}_h-w_h\|\leq \delta_h$ and $\max\{\|p_h\|,\|\tilde{a}_h\|,h\}\leq r_h$, \smallskip
			\item\label{HausDist} $ \mathrm{D}_{\mathrm{H}}\left(A\cap r_h \mathbb{B}_{\H},\widehat{A}_h\right)\leq 9\delta_h$ and $\mathrm{D}_{\mathrm{H}}\left(B,\widehat{B}_h\right)\leq 10\delta_h$;\smallskip
			\item\label{iteration} $q_{h}=\IPA{\widehat{A}_h}{\widehat{B}_h}{\tilde{p}_h}{m_h}=\IPA{\widehat{A}_h}{\widehat{B}_h}{\IPA{A}{B}{q_{h-1}}{n_h}}{m_h}$  and $\dist(q_h,A\cap B) \geq \varepsilon_0$. 
		\end{enumerate}
        \smallskip
        
        Similarly to the last part of the proof of Theorem~\ref{th: separated case}, we generate two sequences $\{A_n\}$ and $\{B_n\}$ of closed convex sets of the form 
		$$\left\lbrace A_n\right\rbrace_{n=1}^\infty:=
		\left\lbrace\underbrace{A,\ldots,A}_{n_1\text{ times}},\underbrace{\widehat{A}_1,\ldots,\widehat{A}_1}_{m_1\text{ times}},\underbrace{A,\ldots,A}_{n_2\text{ times}},\underbrace{\widehat{A}_2,\ldots,\widehat{A}_2}_{m_2\text{ times}},\underbrace{A,\ldots,A}_{n_3\text{ times}},\underbrace{\widehat{A}_3,\ldots,\widehat{A}_3}_{m_3\text{ times}},\ldots\right\rbrace
		$$
		$$
		\left\lbrace B_n\right\rbrace_{n=1}^\infty:= \left\lbrace\underbrace{B,\ldots,B}_{n_1\text{ times}},\underbrace{\widehat{B}_1,\ldots,\widehat{B}_1}_{m_1\text{ times}},\underbrace{B,\ldots,B}_{n_2\text{ times}},\underbrace{\widehat{B}_2,\ldots,\widehat{B}_2}_{m_2\text{ times}},\underbrace{B,\ldots,B}_{n_3\text{ times}},\underbrace{\widehat{B}_3,\ldots,\widehat{B}_3}_{m_3\text{ times}},\ldots\right\rbrace
		$$
		respectively, for which the variational alternating projection sequences starting from $q_0$ and successively projecting onto these sets do not satisfy the conditions in Definition~\ref{def:perturbedseq}. This shows that the pair $(A,B)$ is not $d$-stable. 
	\end{proof}
	The above result provides a complete proof of Theorem~A and positively answers the open problem considered in~Problem~\ref{open problem}.

	%-------------------------------------------------------%
	%                                                       %
	% 			           OPEN PROBLEMS and remarks					%
	%                                                       %
	%-------------------------------------------------------%
	\section{Open Problems and Remarks}\label{sec: problems}
	\noindent 
	A typical example of convex feasibility problem where regularity fails is the so called Moment Problem. This problem consists of a pair $(A,B)$ in a Hilbert space $\mathcal{H}$ given by a linear subspace~$A$ of finite codimension~$k$ and a Hilbert lattice cone $B$. This problem has an intrinsic applied nature, as the particular case of $B$ being the cone of positive elements corresponds to the research of positive zeros of a linear functional operator via von Neumann's projection algorithm.\smallskip\newline
	Bauschke and Borwein \cite{BauschkeBorwein93} proved that, for the case of codimension $k=1$, the alternating projection sequences always converge to a point in $A\cap B$; at the same time, they provided an example (see \cite{BauschkeBorwein93}*{Example~5.5}) where the pair $(A,B)$ has unbounded intersection and is not regular, and so is not $d$-stable thanks to Theorem~\ref{main theorem}. In particular, this means that there exists a sequence of Attouch-Wets approximations for $A$ and $B$ for which the corresponding variational projection sequences fails to converge: however, as we have seen, an explicit construction of  such sequences is a difficult problem.
	\smallskip\newline
	\textit{A posteriori}, the fact that $d$-stability does not hold for these instances of Moment Problem is not surprising since the involved lattice cone $B$ has usually an unbounded basis (and the approximation of such cones is often problematic). For the time being, it remains unknown if the alternating projection sequences converges in norm for a general moment problem of codimension $k>1$. Our results suggest that the variational approach does not seem to be the proper way to investigate the Moment Problem and different techniques related to the lattice structure of the cone are required, especially for higher codimension $k$, see \cite{BattistoniMiglierina}. \smallskip\newline
	To summarize, we hereby established the equivalence between regularity and $d$-stability for a pair of closed convex subsets $(A,B)$ of $\H$ provided that $A\cap B$ is nonempty and bounded (or more generally, if the best approximation sets $E$ and $F$ are nonempty and bounded). Example~5.2 in \cite{DebeMiglStab} shows that regularity does not imply $d$-stability of the pair $(A,B)$ when $A\cap B$ is unbounded, even in a finite-dimensional setting. \smallskip\newline
    At this point, let us recall from \cite[Theorem~3.7]{BD2022} that the alternating projection sequence $a_n=P_AP_B(a_{n-1})$, $n\geq 1$, is \textit{self-contracted}, that is, for any $N\geq 1$ the finite sequence $\{\|a_i-a_N\|\}_{1\leq i\leq N}$ is decreasing. This fact guaranties that the set of accumulation points of the alternate projection sequence $\{a_n\}_{n\geq 1}$ is at most a singleton (in particular, the method converges in finite dimensions, whenever the best approximation sets are nonempty, defining, in addition, a polygonal curve of finite length, see \cite{DDDL2015} \textit{e.g.}). The property of self-contractedness is completely lost in case of inexact projections or in case of variational perturbations of the sets, changing the nature of the problem. Therefore, an interesting question is to determine whether there exists a weaker regularity condition on $(A,B)$ that is equivalent to the norm convergence of the alternating projection method. \smallskip\newline
	Another possible direction for further research concerns regularity and $d$-stability for more than two closed convex sets or for different iterative projection algorithms (see \cites{BCC2012, CRW2019} \textit{e.g.}). Indeed, the convex feasibility problem for more than two sets is a rich field of research, where the order (and the frequency) on which the alternating projections are taken would potentially lead to different conclusions and unexplored notions of regularity.
	\smallskip\newline

	\textbf{Acknowledgement.} Major part of this research has been done during a research visit of the second author to the Università Cattolica del Sacro Cuore (Milano, June 2025). This author thanks his hosts for hospitality. The second author thanks Stoyan Apostolov and Eva Kopeck\'a for useful discussions. The authors thank anonymous referees whose comments and remarks considerably improved the presentation of this work. \smallskip\newline
    The first author was partially supported by INdAM-GNAMPA. The research of the second author was supported by the INdAM (GNAMPA, Professore Visitatore) and the Austrian Science Fund (Grant FWF 10.55776/P36344). The research of the third and fourth authors was supported by the INdAM -GNAMPA Project, CUP E53C23001670001, and by the MICINN project PID2020-112491GB-I00 (Spain). \smallskip\newline
	For open access purposes, the second author has applied a CC BY public copyright license to any author-accepted manuscript version arising from this submission.
	\bigskip
	
	\textbf{Declarations} \smallskip\newline
	%\textit{Funding.} The authors have no relevant financial or non-financial interests to disclose.\smallskip\newline
	\textit{Conflict of interests/Competing interests.} The authors declare that they have no relevant financial or non-financial conflicts of interest to disclose.
	
	%-------------------------------------------------------%
	%                                                       %
	% 						REFERENCES   					%
	%                                                       %
	%-------------------------------------------------------%

\end{document}